\font\smallsc=cmcsc10
\font\smallsl=cmsl10
\newtheorem{theorem}{Theorem}
\newtheorem{corollary}[theorem]{Corollary}
\newtheorem{lemma}[theorem]{Lemma}
\newtheorem{proposition}[theorem]{Proposition}
\newcommand{\gon}{\mathrm{gon}}
\newcommand{\Pbb}{\mathbb P}
\newcommand{\Cbb}{\mathbb C}
\newcommand{\Ocal}{\mathcal O}
\newcommand{\mi}{m^{(i)}}
\newcommand{\n}[1]{n^{(#1)}}
\newcommand{\nlinha}[1]{{n'}^{(#1)}}
\renewcommand{\:}{\colon }
\begin{document}

\title{Characterizing gonality for two-component stable curves}
\author{Juliana Coelho\\{\scriptsize julianacoelhochaves@id.uff.br}
\and Frederico Sercio\\{\scriptsize fred.feitosa@ufjf.edu.br}}

\maketitle

\begin{abstract}
It is a well-known result that a stable curve of compact type over $\Cbb$ having two components is hyperelliptic if and only if 
both components are hyperelliptic and the point of intersection is a Weierstrass point for each of them. 
With the use of admissible covers, we generalize this characterization in two ways: 
for stable curves of higher gonality having two smooth components and one node; 
and for hyperelliptic and trigonal stable curves having two smooth non rational components and any number of nodes.
\end{abstract}


\section{Introduction}


A smooth curve $C$ is said to be $k$-gonal if it admits a degree-$k$ map to $\mathbb P^1$. 
The gonality, that is, the minimum $k$ such that $C$ is $k$-gonal, is an important numerical invariant in the study of algebraic curves. When considering smooth curves in families, one feels the need to consider singular curves as well, since smooth curves can degenerate to singular ones. 

There are a few possible ways to extend the notion  of gonality to singular curves. 
One way is to simply take the same definition as in the smooth case, as is done in \cite{sthor} and   \cite{renato}.
The drawback here is that this definition does not work very well in families, since not all curves that are limits of smooth $k$-gonal ones have maps of degree $k$ to $\Pbb^1$. 
One solution is to consider limits of maps from smooth curves to $\Pbb^1$.
This can be done using stable maps, as in \cite{cv},
or using Harris and Mumford's admissible covers \cite{HM82}. In this paper we adopt the latter point of view, as was done previously in \cite{paper1}.

It is worth mentioning other approaches to the problem.
For instance, since a map from a smooth curve to $\Pbb^1$ is given by a linear system on the curve, 
one can consider Eisenbud and Harris' limit linear series introduced \cite{eisenharris} (see for instance \cite{em} and \cite{osser}). 
Furthermore, some authors have been working on relating the gonality of stable curves to the gonality of graphs and tropical curves (see \cite{caporaso}).




\subsection{Main Results}

In this paper we always work over $\mathbb{C}$.
As we mentioned, we consider a stable curve to be $k$-gonal if it is a limit of smooth $k$-gonal curves.
It is a well-known result that a stable curve of compact type with two components is hyperelliptic if and only if 
both components are hyperelliptic and the point of intersection is a Weierstrass point for each of them. 
In this work we generalize this characterization with the use of admissible covers.

A $k$-sheeted quasi admissible cover for a nodal curve $C$ is a finite morphism $\pi\:C\rightarrow B$ of degree $k$ satisfying a few conditions, where $B$ is a nodal curve of genus $0$ (see Section \ref{sec:admiss} for the precise definition). A quasi admissible cover is said to be admissible if it is simply branched away from the singular points of $C$. In \cite{HM82}, Harris and Mumford showed that a stable curve $C$ is $k$-gonal if there exists an admissible cover for a curve $C'$ stably equivalent to $C$, that is, such that $C$ is obtained from $C'$ by contracting to a point some of the rational components of $C'$ meeting the rest of the curve in just one or two points.

In Section \ref{sec:admiss} we consider quasi admissible and admissible covers. 
We note in Theorem \ref{thm:quasitoadmiss} that a quasi admissible cover can always be extended to an admissible one.
The main concern of this section is the problem of gluing quasi admissible covers, in the sense of \cite{paper1}. In Theorem \ref{thm:gluing} we adapt a construction of \cite{paper1} to this case. 
The proof is \textit{mutatis mutandis} that of \cite[Thm 3.7]{paper1}, although we needed some extra hypotheses 
to obtain an admissible cover (see Theorem \ref{thm:gluing} for the precise statement). 

In Section \ref{sec:2comp} we restrict ourselves to two-component stable curves. 
 Proposition \ref{prop:gluing2} deals with the construction of Theorem \ref{thm:gluing}  in this case
and, as a consequence, we obtain in  Corollary \ref{cor:gonchar} a bound on the gonality of a two-component stable curve, that is, the minimum $k$ such that the curve is $k$-gonal. 
The theorem below summarizes these results:

\medskip
\noindent{\bf Theorem A.} \; {\it 
Let $C$ be a stable curve with smooth components $C_1$ and $C_2$ 
and nodes $n_1,\ldots,n_\delta$. 
For each $j=1,\ldots, \delta$ let $n_j^{(1)}\in C _1$ and $n_j^{(2)}\in C_2$ be the branches of $n_j$.

For $i=1,2$
let $\Pi_i(k_i)$ be the set of degree-$k_i$ maps $\pi_i\colon C_i\rightarrow \Pbb^1$ such that
$$\pi_i(n_1^{(i)})=\ldots=\pi_i(n_\delta^{(i)}).$$
For $\pi_i\in\Pi_i(k_i)$ let  $e_{\pi_i}(n_j^{(i)})$ be the ramification index of $\pi_i$ at $n_j^{(i)}$, for $i=1,2$ and $j=1,\ldots,\delta$.
For $\pi_1\in\Pi_1(k_1)$ and $\pi_2\in\Pi_2(k_2)$ set 
$$e(\pi_1,\pi_2)=\sum_{j=1}^\delta \min\{e_{\pi_1}(n_j^{(1)}),\, e_{\pi_2}(n_j^{(2)})\}.$$

\begin{enumerate}[(i)]
\item If $\pi_1\in\Pi_1(k_1)$ and $\pi_2\in\Pi_2(k_2)$, then
$C$ is $(k_1+k_2-e(\pi_1,\pi_2))$-gonal;

\item If both $\Pi_1(\gon(C_1))$ and $\Pi_2(\gon(C_2))$ are non empty, then 
$$\gon(C)\leq\gon(C_1)+\gon(C_2)-\max\{e(\pi_1,\pi_2)\ | \ \pi_i\in\Pi_i(\gon(C_i)),\; i=1,2 \}.$$ 
\end{enumerate}
 }\medskip

The inequality above is an equality when $C$ has only one node. 
To see that, we show  in Proposition \ref{prop:converse} that, in some sense, the construction in Theorem \ref{thm:gluing} is optimal (see Proposition \ref{prop:converse} for the precise statement). 
This allows us to characterize in Theorem \ref{thm:goncharcpt} the gonality of a stable curve with 
two smooth  components and a single node:

\medskip
\noindent{\bf Theorem B.} \; {\it 
Let $C$ be a stable curve with smooth  components $C_1$ and $C_2$ and a single node $n$.
For $i=1,2$, let  $n^{(i)}\in C_i$ be the branch of $n$ in $C_i$ and 
let $e_i$ be the maximum ramification index of  a degree-$\gon(C_i)$ map $C_i\rightarrow \mathbb{P}^1$ at $n^{(i)}$.
Then $\gon(C)=\gon(C_1)+\gon(C_2)-e$, where
$e=\min\{e_1,e_2\}$.
 }\medskip

In Section \ref{sec:hyptrig} we use results of the previous sections to characterize hyperelliptic and trigonal curves having two smooth non rational components. 
For hyperelliptic curves we show in Theorem \ref{thm:hyp} that, besides the 
well-known case of stable curves of compact type, there is only one other possibility:

\medskip
\noindent{\bf Theorem C.} \; {\it 
Let $C$ be a stable curve  with two smooth non rational components. 
Then $C$ is hyperelliptic if and only if both components are hyperelliptic and one of the following cases hold:
\begin{enumerate}[(i)]
\item $C$ has one node and the branches of the node in each component have ramification index 2;

\item $C$ has two nodes and the branches of the nodes in each component are
points of ramification index 1 having the same image under a degree-2 map.
\end{enumerate}
 }\medskip

Finally, in Theorem \ref{thm:trig} we obtain a characterization of trigonal stable curves with two smooth non rational components. As in the hyperelliptic case, such a curve can have at most three nodes, and its components are either hyperelliptic or trigonal. 
However, in sharp contrast with the hyperelliptic case, there are eleven different  possibilities depending on
the number of nodes, 
the gonality of the components
and the behaviour of the branches of the nodes (see Theorem \ref{thm:trig} for the precise statement).

\section{Technical background}\label{sec:back}

In this paper we always work over the field of complex numbers $\mathbb{C}$.

A \textit{curve} $C$ is a connected, projective and reduced scheme of dimension $1$ over $\Cbb$. 
We denote by $C^{\text{sing}}$ the singular locus of $C$, and by $C^{\text{sm}}$ the smooth locus of $C$.
The \emph{genus} of $C$ is $g(C):=h^1(C,\Ocal_C)$.
A \textit{subcurve} $Y $ of $C$ is a reduced subscheme of pure dimension $1$, or equivalently, a reduced union of irreducible components of $C$. 
If $Y\subseteq C$ is a subcurve, we say $Y^{c}:=\overline{C\smallsetminus Y}$ is the \emph{complement} of $Y$ in $C$.

A \textit{nodal curve} $C$ is a curve with at most ordinary double points, called \emph{nodes}. 
A node is said to be \emph{separating} if there is a subcurve $Y$ of $C$ such that $Y\cap Y^c=\{n\}$. 
The subcurves $Y$ and $Y^c$ are then said to be \emph{tails} of $C$ associated to the separating node $n$.
A \emph{rational tail} is a tail of genus 0.
More generally, a \emph{rational chain}  is a nodal curve of genus 0. 
We remark that a curve  is a rational chain if and only if its nodes are all separating and its components are all rational.

Let  $g$ and $n$ be non negative integers such that $2g-2+n>0$. A \textit{$n$-pointed stable curve of genus $g$} is a curve $C$ of genus $g$ together with $n$ distinct \emph{marked points} $p_1,\ldots,p_n\in C$  such that for every  smooth rational component $E$ of $C$,  
 the number of points in the intersection   $E\cap E^{c}$ plus the number of indices $i$ such that  $p_i$ lies on $E$ is at least three. 
A \textit{stable curve} is a 0-pointed stable curve.

Let $C$ be a nodal curve e let $\nu\:\widetilde{C}\rightarrow C$ be its
normalization. Let $n$ be a node of $C$. 
The points $\n{1} ,\n{2} \in
\widetilde{C}$ such that $\nu(\n1)=\nu(\n2)=n$ are said to be the \textit{branches over the node} $n$.

Let $\pi\:C\rightarrow B$ be a finite map between curves and let $p\in C$ and $q\in B$ be smooth points such that $\pi(p)=q$. Let $x$ be a  local parameter of $C$ around $p$ and $t$ be a local parameter of $B$ around $q$. 
Then, locally around $p$, the map $\pi$ is given by $t=x^e$, where $e$ is the \emph{ramification index} of $\pi$ at $p$, denoted by $e_\pi(p)$.
Note that $1\leq e_\pi(p)\leq k$, where $k$ is the degree of $\pi$. 
 We say \emph{$\pi$ is ramified at $p$}, or that \emph{$p$ is a ramification point of $\pi$}, if $e_\pi(p)\geq2$. 
We say $\pi$ is \emph{totally ramified} at $p$ if $e_\pi(p)=k$.

Let $\pi\:C\rightarrow B$ be a finite map of degree $k$ between curves and let $q$ be a  smooth point $B$. Then
\begin{equation}\label{eq:degk}
\sum_{p\in\pi^{-1}(q)}e_{\pi}(p)=k.
\end{equation}
The \emph{multiplicity of $q$ in $B$ with respect to $\pi$} is defined as
$$d_{\pi}(q)=\sum_{p\in\pi^{-1}(q)}(e_{\pi}(p)-1).$$
Note that $0\leq d_{\pi}(q)\leq k-1$.
Moreover, $d_{\pi}(q)= k-1$ if and only if there exists a single point $p\in C$ such that $\pi(p)=q$, and in this case $\pi$ is totally ramified at $p$. The \emph{branch points of $\pi$} are the smooth points $q$ in $B$ such that $d_\pi(q)\geq 1$.
The Riemann-Hurwitz formula counts the number $b(\pi)$ of  branch points of $\pi$, with multiplicity. In 
 particular, if $B$ is rational, we have
\begin{equation}\label{eq:RH}
b(\pi)=\sum_{q\in B^{\text{sm}}}d_{\pi}(q)
=2g(C)+2k-2.
\end{equation}



\section{Quasi admissible and admissible covers}\label{sec:admiss}

A smooth curve is \emph{$k$-gonal} if it admits a $\mathfrak{g}_{k}^{1}$, that is, a line bundle of degree $k$ having at least two sections.
Equivalently, a smooth curve is $k$-gonal if it admits a map of degree  $k$ or less  to $\mathbb{P}^{1}$. Now, a stable curve $C$ is 
 \emph{$k$-gonal} if it is a limit of smooth $k$-gonal curves in $\overline M_g$. 
 More precisely, 
a \textit{smoothing} of a curve $C$ is a proper and flat morphism 
$f\:\mathcal{C}\rightarrow \mathrm{Spec}(\mathbb{C}[[t]])$ 
whose fibers are curves, with special fiber $C$ and
such that $\mathcal{C}$ is regular.
A stable curve $C$ is then said to be $k$-gonal if it admits a
 smoothing $f\:\mathcal{C}\rightarrow S$ whose general fiber is
a $k$-gonal smooth curve and the special fiber is $C $.

Alternatively, $k$-gonal stable curves can be characterized in terms of admissible covers.
A \emph{$k$-sheeted quasi admissible cover} consists of  a finite morphism $\pi\:C\rightarrow B$ of degree $k$, such that $C$ and $B$ are nodal curves, with $g(B)=0$,   and
\begin{enumerate}[(1)]
\item $\pi^{-1}\left(B^{\text{sing}}\right)  =C^{\text{sing}}$;
\item for every  subcurve $Z\subset B$ we have
$$|Z\cap Z^c| + \sum_{q\in Z\cap B^{\text{sm}}}d_{\pi}(q)\geq 3;$$
\item for every node $q$ of $B$ and every node $n$ of $C$ 
lying over it, the
two branches of $C$ over $n$ map to the branches of $B$ over $q$ with the same
ramification index.
\end{enumerate}

Let $\pi\colon C\rightarrow B$ be a $k$-sheeted quasi admissible cover. Let   $n$ be a node of $C$ and set $q=\pi(n)$. By condition (1), $q$ is a node of $B$. Then condition (3) above implies that, locally around $n$, the curve $C$ can be described as $xy=t$  and, locally around $q$, the curve  $B$ can be described as $uv=t^e$ for some $e$. Moreover, the map $\pi$ is given by $u=x^e$ and $v=y^e$ and $e$ is the ramification index of $\pi$ at $n$.

An  \emph{admissible cover} is a quasi admissible cover $\pi$ satisfying
\begin{enumerate}
\item[(4)] $\pi$ is simply branched away from $C^{\text{sing}}$, that is, 
over each smooth point of $B$ there exists at most one point of $C$ 
where $\pi$ is ramified and this point has ramification index 2. 
\end{enumerate}
Condition (4) is equivalent to saying that $d_\pi(q)\leq 1$ for every smooth point $q\in B$.
In particular, if condition (4) holds, then condition (2) means that  $B$ is a stable pointed curve of genus $0$, when considered with the branch points of $\pi$.

If $\pi\:C\rightarrow B$ is a finite map, we say a subcurve $Z$ of $B$  \emph{violates condition (2) for $\pi$} if the inequality in condition (2) is not satisfied. The next lemma shows it is enough to check condition (2) on the irreducible components of $B$.

\begin{lemma} \label{lem:condition2} 
Let $\pi\:C\rightarrow B$  be a finite map between curves such that $g(B)=0$.  
If a subcurve $Z$ of $B$ violates condition (2) for $\pi$, then there is an irreducible component of $Z$ that also  violates condition (2) for $\pi$. 
\end{lemma}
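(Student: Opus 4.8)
The plan is to argue by contradiction on the number of irreducible components of a minimal counterexample, or equivalently to show directly that if every irreducible component of $Z$ satisfies condition (2), then $Z$ itself does. Write $Z = Z_1 \cup \cdots \cup Z_r$ as the union of its irreducible components and proceed by induction on $r$; the case $r = 1$ is trivial. For the inductive step, since $g(B) = 0$, the dual graph of $B$ is a tree, so we may choose a component, say $Z_r$, meeting $W := Z_1 \cup \cdots \cup Z_{r-1}$ in a single point, call it $p$. The subcurve $W$ has $r-1$ components, each of which satisfies condition (2) by hypothesis, so by induction $W$ satisfies condition (2).

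The key step is then a bookkeeping comparison between the quantity
$$
N(Z) := |Z \cap Z^c| + \sum_{q \in Z \cap B^{\text{sm}}} d_\pi(q)
$$
for $Z$ and the corresponding quantities $N(W)$ and $N(Z_r)$. Here I would track how the points $p$ and how the nodes of $B$ are counted. The node $p$ is a point of $Z$ but lies in the interior of $Z$, so it does not contribute to $|Z \cap Z^c|$; it contributes one to $|W \cap W^c|$ (as the point where $W$ meets $Z_r$) and one to $|Z_r \cap Z_r^c|$. Every other node of $B$ that is counted in $|Z \cap Z^c|$ is counted in exactly one of $|W \cap W^c|$ or $|Z_r \cap Z_r^c|$, and conversely the only "spurious" contributions to $N(W) + N(Z_r)$ beyond $N(Z)$ come from the node $p$. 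Likewise every smooth point $q$ of $B$ lying on $Z$ lies on exactly one of $W$ or $Z_r$ (smooth points are not nodes, so they are interior to a unique component-union), and $d_\pi(q) \geq 0$ always, so the ramification sums add up with no loss. Putting this together gives the inequality
$$
N(Z) \;\geq\; N(W) + N(Z_r) - 2 \;\geq\; 3 + 3 - 2 \;=\; 4 \;\geq\; 3,
$$
where the "$-2$" accounts for the double-counted node $p$ on each side. Hence $Z$ satisfies condition (2), contradicting that $Z$ violates it, and the lemma follows.

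The main obstacle is getting the bookkeeping in the displayed inequality exactly right: one must be careful that a node of $B$ lying on $Z$ but \emph{not} separating $W$ from $Z_r$ is still accounted for correctly (it contributes to $|Z \cap Z^c|$ iff its other branch leaves $Z$, and that happens iff it contributes to exactly one of $|W\cap W^c|$, $|Z_r \cap Z_r^c|$), and that no smooth branch point of $\pi$ is lost or double-counted. A clean way to handle this uniformly is to note that for any partition of a subcurve $Z$ into two subcurves $W$ and $Z_r$ meeting only at the single point $p$, one has $|Z \cap Z^c| = |W \cap W^c| + |Z_r \cap Z_r^c| - 2$ exactly (the "$-2$" removing the two incidences at $p$ that are internal to $Z$), and the branch-point sum over $Z \cap B^{\text{sm}}$ splits as a disjoint sum over $W \cap B^{\text{sm}}$ and $Z_r \cap B^{\text{sm}}$. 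With these two identities the estimate above is immediate, and since $r \geq 2$ forces the tree structure to supply such a splitting point $p$, the induction closes.
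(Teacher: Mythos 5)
Your argument is correct in substance but takes a genuinely different route from the paper's. You prove the contrapositive by induction on the number of irreducible components of $Z$: split off a component $Z_r$ meeting the rest $W$ of $Z$ in a single node $p$, and use the exact bookkeeping $|Z\cap Z^c|=|W\cap W^c|+|Z_r\cap Z_r^c|-2|W\cap Z_r|$ together with the fact that the branch-point sum over $Z\cap B^{\mathrm{sm}}$ splits disjointly over $W$ and $Z_r$; with $|W\cap Z_r|=1$ this gives $N(Z)=N(W)+N(Z_r)-2\geq 3+3-2\geq 3$, where $N$ denotes the left-hand side of condition (2). The paper instead works directly with the violating subcurve: if $Z=B$, then $\pi$ has at most two branch points counted with multiplicity and some irreducible rational tail (leaf component) missing them violates (2); if $Z$ is proper, the violation forces $Z$ to carry at most one branch point with multiplicity and $|Z\cap Z^c|\leq 2$, and one picks a component of $Z$ avoiding that branch point and meeting its complement in at most two points. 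The paper's proof is shorter and exhibits the violating component explicitly; yours is more systematic, proves the sharper additivity statement, and avoids the case split $Z=B$ versus $Z$ proper. One caveat: subcurves in this paper need not be connected, so your claim that the tree structure always supplies a component $Z_r$ meeting $W$ in exactly one point is not literally true --- if $Z$ is disconnected, a leaf of the induced forest may meet $W$ in no point at all. This is harmless, since then $|W\cap Z_r|=0$, the correction term $-2|W\cap Z_r|$ vanishes and the estimate only improves, but you should state the identity with the factor $|W\cap Z_r|\in\{0,1\}$ rather than a fixed $-2$ (and note, as the paper implicitly does, that $B$ is nodal, so that its dual graph is a tree and each node lies on at most two components, which is what makes the disjointness claims in your bookkeeping valid).
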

\begin{proof}
Let  $Z$ be a subcurve of $B$ that violates condition (2) for $\pi$.
If $Z$ is irreducible, then there is nothing to prove. Now consider $Z$ to be reducible. 

If $Z=B$ then $Z\cap Z^c$ is empty and hence $\pi$ has at most two branch points, counted with multiplicity. But then any irreducible rational tail $W$ of $B$ not containing both of these points will violate condition (2) for $\pi$. 

Now  assume $Z$ is a proper subcurve of $B$. Since $Z$ violates condition (2) for $\pi$, then $Z$  contains at most one branch point of $\pi$  and $Z\cap Z^C$ consists of one or two points. Hence we may choose $W$ to be any component of $Z$ not containing the branch point and such that $W\cap W^c$  consists of one or two points.
\end{proof}

We remark that not all finite maps from a smooth curve to $\Pbb^1$ are quasi admissible.

\begin{lemma} \label{lem:ratldeg2} 
Let $\pi\:C\rightarrow B$  be a finite map of degree $k$ between curves, such that $g(B)=0$. If $g(C)=0$ and $k\leq 2$ then $B$ violates condition (2). 
Moreover, if $C$ and $B$ are smooth then $\pi$ is not quasi admissible if and only if $g(C)=0$ and $k\leq 2$.
\end{lemma}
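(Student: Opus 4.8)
The plan is to treat the two assertions separately, starting with the easier implication. First I would prove that if $g(C)=0$ and $k\leq 2$, then $B$ violates condition (2). Since $g(B)=0$ as well, the Riemann–Hurwitz formula \eqref{eq:RH} gives $b(\pi)=2g(C)+2k-2=2k-2$, so $b(\pi)=0$ when $k=1$ and $b(\pi)=2$ when $k=2$. Taking $Z=B$ in condition (2), we have $Z\cap Z^c=\emptyset$, so the left-hand side of the inequality equals $\sum_{q\in B^{\text{sm}}}d_\pi(q)=b(\pi)\leq 2<3$; hence $B=Z$ itself violates condition (2). (This is exactly the mechanism used in the proof of Lemma \ref{lem:condition2}.)

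For the ``moreover'' part, assume in addition that $C$ and $B$ are smooth, so $B\cong\Pbb^1$ and $C$ is an irreducible smooth curve. The implication just proved shows that $g(C)=0$, $k\leq 2$ forces $B$ to violate condition (2), hence $\pi$ is not quasi admissible. For the converse, suppose $\pi$ is \emph{not} quasi admissible; I must show $g(C)=0$ and $k\leq 2$. Conditions (1) and (3) are vacuous here because $B$ (hence $C$) is smooth, so $\pi$ fails precisely condition (2): by Lemma \ref{lem:condition2} some irreducible component of a violating subcurve violates (2), but $B\cong\Pbb^1$ is irreducible, so the only candidate is $Z=B$ itself. Thus $|B\cap B^c|+\sum_{q}d_\pi(q)=b(\pi)<3$, i.e.\ $b(\pi)\leq 2$. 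By Riemann–Hurwitz $b(\pi)=2g(C)+2k-2\geq 2k-2\geq 2$ already when $k\geq 2$, with equality forcing $g(C)=0$; and $b(\pi)\leq 2$ together with $k\geq 1$ gives $2g(C)+2k-2\leq 2$, i.e.\ $g(C)+k\leq 2$, so $k\leq 2$ and, when $k=2$, also $g(C)=0$. When $k=1$ the map is an isomorphism and $g(C)=g(B)=0$ automatically. In all cases $g(C)=0$ and $k\leq 2$, as desired.

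The argument is essentially a bookkeeping exercise with Riemann–Hurwitz, so there is no serious obstacle; the one point requiring a little care is making sure that in the ``moreover'' direction conditions (1) and (3) are genuinely automatic for smooth $C$ and $B$ (there are no nodes to check), so that the failure of quasi admissibility can only come from condition (2). Once that reduction is in place, invoking Lemma \ref{lem:condition2} to pass to the irreducible component $B$ and then reading off $b(\pi)\leq 2$ from \eqref{eq:RH} finishes the proof.
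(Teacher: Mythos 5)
Your argument is correct and follows essentially the same route as the paper: apply the Riemann--Hurwitz count \eqref{eq:RH} to see that $B$ (taken as the subcurve $Z=B$, with $Z\cap Z^c=\emptyset$) violates condition (2) exactly when $b(\pi)\leq 2$, and note that for smooth $C$ and $B$ conditions (1) and (3) are automatic, so quasi admissibility fails only through condition (2); your handling of the $k=1$ case via the map being an isomorphism matches the paper's dismissal of the $(g(C),k)=(1,1)$ possibility using $g(B)=0$.
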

\begin{proof}
By (\ref{eq:RH}), the map $\pi$ has $b(\pi)\leq2$ ramification points counted with multiplicity if and only if 
$g(C)=0$ and $k\leq 2$
or $g(C)=1$ and $k=1$. 
Note however that the later situation does not happen since $g(B)=0$.
Moreover, if $C$ is smooth, then then conditions (1) and (3) always hold.
\end{proof}

Let $C$ be a nodal curve. We say that a nodal curve $C^{\prime}$ is \emph{stably equivalent} to $C$ if $C$ can be obtained from $C^{\prime}$ by contracting to a point some of the smooth rational components of $C^{\prime}$ meeting the other
components of $C^{\prime}$ in only one or two points. In that case, $g(C)=g(C')$ and there is a contraction map $\tau\:C'\rightarrow C$.
We say that a point $p'\in C'$ \emph{lies over} a point $p\in C$ if $\tau(p')=p$.

The following result is a consequence of \cite[Thm. 4, p. 58]{HM82} and relates the notion of admissible covers to that of gonality of stable curves.

\begin{theorem}[Harris-Mumford]\label{HarrMum}
A stable curve $C$ is $k$-gonal if and only if
there exists a $k$-sheeted admissible cover $C^{\prime}\rightarrow B$, 
where $C^{\prime}$ is stably equivalent to $C$.
\end{theorem}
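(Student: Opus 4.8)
The plan is to obtain this as a reformulation of \cite[Thm.~4, p.~58]{HM82}, the real content being the properties of the space of admissible covers proved there; what remains is to match that statement to our smoothing-based definition of gonality. Put $b=2g(C)+2k-2$, let $\mathcal{H}_{k,b}$ be the Hurwitz scheme parametrizing simply branched degree-$k$ covers of $\mathbb{P}^1$ by smooth connected genus-$g$ curves, and let $\overline{\mathcal{H}}_{k,b}$ be its Harris--Mumford compactification by $k$-sheeted admissible covers. By \cite{HM82}, $\overline{\mathcal{H}}_{k,b}$ is the closure of $\mathcal{H}_{k,b}$, it is proper over $\overline{M}_{0,b}$, and its points are exactly the $k$-sheeted admissible covers with $b$ branch points; here conditions (2) and (4) are what ensure that the target of such a cover, marked at its branch points, is a stable $b$-pointed genus-$0$ curve, so that it does define a point of $\overline{\mathcal{H}}_{k,b}$. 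Sending an admissible cover to the stabilization of its source defines a morphism $\sigma\colon\overline{\mathcal{H}}_{k,b}\to\overline{M}_g$, proper because $\overline{\mathcal{H}}_{k,b}$ is proper and $\overline{M}_g$ is separated. The fact I will cite from \cite[Thm.~4]{HM82} is that the (closed) image of $\sigma$ is precisely the closure in $\overline{M}_g$ of the locus of smooth $k$-gonal curves.

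Granting this, both implications follow formally. For the ``if'' direction, let $\pi\colon C'\to B$ be a $k$-sheeted admissible cover with $C'$ stably equivalent to $C$: by the above, $\pi\in\overline{\mathcal{H}}_{k,b}$ and $\sigma(\pi)$ is the stabilization of $C'$, which equals $C$ by the definition of stable equivalence, so $[C]$ lies in the closure of the smooth $k$-gonal locus. One then promotes this to an actual smoothing $\mathcal{C}\to\mathrm{Spec}(\mathbb{C}[[t]])$ with regular total space, special fiber $C$ and $k$-gonal general fiber by a standard transversality argument inside the versal deformation of $C$ (take a general one-parameter germ through $[C]$ in that closed locus, transverse to each boundary divisor through $[C]$, and pull back the universal family); hence $C$ is $k$-gonal. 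For the converse, if $C$ is $k$-gonal, fix a smoothing $\mathcal{C}\to\mathrm{Spec}(\mathbb{C}[[t]])$ with special fiber $C$ and smooth $k$-gonal general fiber $\mathcal{C}_\eta$. The induced arc $\mathrm{Spec}(\mathbb{C}[[t]])\to\overline{M}_g$ sends the closed point to $[C]$ and the generic point into the $k$-gonal locus, hence factors through the closed image of $\sigma$; choosing a point of $\overline{\mathcal{H}}_{k,b}$ over its generic point -- possibly after a finite base change $\mathrm{Spec}(\mathbb{C}[[s]])\to\mathrm{Spec}(\mathbb{C}[[t]])$, which does not change the special fiber $C$ -- and applying the valuative criterion to the proper morphism $\sigma$, the arc lifts to $\mathrm{Spec}(\mathbb{C}[[s]])\to\overline{\mathcal{H}}_{k,b}$. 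The admissible cover $C'\to B$ at its closed point satisfies $\sigma(C'\to B)=[C]$, i.e.\ $C'$ is stably equivalent to $C$, as desired.

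The step I expect to be the main obstacle, and which I would cite rather than reprove, is precisely the input from \cite{HM82} that the image of $\sigma$ is the \emph{entire} closure of the $k$-gonal locus. The inclusion into that closure is immediate from the definition of $\sigma$; for the reverse inclusion one must know that every stable curve arising as a limit of smooth curves carrying a $\mathfrak{g}^1_k$ is $\sigma$ of some admissible cover, and this combines the properness of $\overline{\mathcal{H}}_{k,b}$ over $\overline{M}_{0,b}$ with the irreducibility of the $k$-gonal locus -- so that curves of gonality strictly less than $k$ still lie in the closure of the degree-exactly-$k$ Hurwitz locus that $\mathcal{H}_{k,b}$ dominates. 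The remaining point, reconciling ``$[C]$ is a limit of smooth $k$-gonal curves in $\overline{M}_g$'' with ``$C$ admits a regular smoothing with special fiber $C$ and $k$-gonal general fiber,'' is the routine transversality argument used above.
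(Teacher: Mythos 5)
Your overall route is the same as the paper's: the paper proves this theorem by citation (``Cf.\ Harris--Morrison, Thm.~3.160''), and you likewise defer the real content to \cite{HM82}, adding the standard glue (properness of the compactified Hurwitz space, valuative criterion, base change killing nothing in the special fiber). That glue is fine, including your explicit acknowledgement that one must know lower-gonality curves lie in the closure of the simply-branched degree-$k$ Hurwitz locus.

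The one step I would not accept as written is the bridge from ``$[C]$ lies in the closure of the smooth $k$-gonal locus'' to the paper's actual definition of $k$-gonal (a smoothing with \emph{regular} total space and $k$-gonal general fiber), which you dismiss as ``a standard transversality argument: take a general one-parameter germ through $[C]$ in that closed locus, transverse to each boundary divisor.'' Genericity inside a fixed closed locus does not give transversality: a closed sublocus of $\overline{M}_g$ (or of the versal deformation space of $C$) can be tangent to a boundary divisor at a given point, in which case every arc in it has contact order at least $2$ and pulls back to a family with $xy=t^m$, $m\geq 2$, at the corresponding node, i.e.\ a non-regular total space. This is not a hypothetical worry here: the arcs one gets by naively deforming an admissible cover produce exactly $xy=t^{e}$ at a node of the source where the cover has ramification index $e$, so for $e\geq 2$ (e.g.\ the compact-type hyperelliptic case with the node at Weierstrass points) the ``obvious'' arcs in the gonal locus are \emph{not} transverse, and the existence of a transverse arc is a genuine assertion about the local structure of the locus. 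The correct justification comes from the finer local description of the space of admissible covers in \cite{HM82} (one parameter $s$ for each node of the source with $s^{e}=t$, $t$ the smoothing parameter of the corresponding node of $B$, the source family being $xy=s$ there), or equivalently by constructing the regular smoothing directly as a cover of a degenerating target whose node is an $A_{e-1}$ point. So either prove that non-tangency statement, or replace the parenthetical argument by an appeal to this part of the Harris--Mumford deformation theory; as it stands the sentence asserts the key point rather than proving it. (To be fair, the paper's one-line citation glosses the same reconciliation of its smoothing definition with ``limit in $\overline{M}_g$,'' so your proposal is no less rigorous than the printed proof, but the justification you do offer for that step is not a valid argument.)
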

\begin{proof}
Cf. \cite[Thm. 3.160, p. 185]{HM}.
\end{proof}

We remark that condition (4) in the definition of and admissible cover is in fact not necessary, as a quasi admissible cover can always be turned into an admissible cover, as shown the following result from  \cite{paper1}. We however point out that this proccess exchanges a ramification point of index $e$ (more generally, a branch point of multiplicity $d$) of the original quasi admissible cover for $e-1$ distinct ramification points of index $2$ (more generally, $d$ branch points of multiplicity 1) of the new admissible cover.

\begin{theorem}\label{thm:quasitoadmiss}
Let $\pi\: C \rightarrow B$ be a $k$-sheeted quasi admissible cover.
Then there is a $k$-sheeted admissible cover
$\pi' \: C' \rightarrow B'$ such that $C'$ is stably equivalent to $C$ and contains $C$ as a subcurve, $B'$ is stably equivalent to $B$ and contains $B$ as a subcurve, and $\pi'|_{C}=\pi$.
\end{theorem}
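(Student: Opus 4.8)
The plan is to fix a quasi admissible cover $\pi\:C\to B$ and remedy, one at a time, the smooth branch points $q\in B^{\text{sm}}$ where condition (4) fails, i.e. where $d_\pi(q)\geq 2$. At such a $q$, write $\pi^{-1}(q)=\{p_1,\ldots,p_r\}$ with ramification indices $e_1,\ldots,e_r$ summing to $k$; the failure of (4) means $d_\pi(q)=\sum(e_i-1)\geq 2$. The idea, following \cite{paper1}, is to replace the single rational component of $B$ carrying $q$ by a chain of $\Pbb^1$'s, spreading the branching so that each new smooth branch point has multiplicity exactly $1$, and to correspondingly modify $C$ over this locus by inserting $d_\pi(q)-1$ rational tails or bridges. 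Concretely, I would blow up $B$ at $q$ to attach a new $\Pbb^1$, and over it build the degree-$k$ cover whose branching profile over that $\Pbb^1$ "uses up" one unit of multiplicity at a smooth point while transferring the rest to the new node; iterating $d_\pi(q)-1$ times resolves $q$. One checks that the added components of $C$ lying over the new rational components are themselves rational, meeting the rest of $C$ in one or two points, so that $C'$ is stably equivalent to $C$ and contains $C$; similarly for $B'$ and $B$; and $\pi'$ restricts to $\pi$ on $C$ by construction.

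The key steps, in order, are: (1) reduce to the case of a single bad smooth branch point $q$ and set up local coordinates so $B$ near $q$ is a smooth affine line and $C$ near $\pi^{-1}(q)$ is a disjoint union of $r$ smooth branches with $\pi$ given by $t=x_i^{e_i}$; (2) construct the "local model": replace the line through $q$ by two components $B_0\cup E$ ($E$ a new $\Pbb^1$) glued at a node, and define a degree-$k$ cover over $B_0\cup E$ whose restriction over $B_0$ agrees with $\pi$ away from $q$, which has a node of $C'$ over the new node of $B$ with matching ramification indices on both branches (so condition (3) holds), and whose branching over $E^{\text{sm}}$ has total multiplicity one less than before—pushing a multiplicity-$1$ branch point onto $E$ and carrying $d_\pi(q)-1$ units further out; (3) verify conditions (1)–(3) for the new map and that the pointed genus-$0$ curve $E$ (with the induced node and branch point) together with the rest of $B'$ still satisfies condition (2), invoking Lemma \ref{lem:condition2} to check it only on components; (4) iterate until every smooth branch point has multiplicity $\leq 1$, i.e. condition (4) holds; (5) assemble the global $C'\to B'$ and observe the stable-equivalence and restriction claims, noting that genus is preserved since the inserted pieces are rational with one or two attaching points.

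The main obstacle I expect is step (2): producing, at the node being created, a cover of the two-component base whose two branches over each node of $C'$ carry equal ramification indices (condition (3)) while simultaneously arranging that exactly one unit of branching multiplicity is "shed" at a smooth point of the new rational component $E$ and the remaining $d_\pi(q)-1$ units are available to be shed further out. This is essentially a local gluing problem for covers of $\Pbb^1$—the same bookkeeping as in \cite[Thm.\ 3.7]{paper1}—and the delicate point is keeping the ramification data consistent across the newly introduced nodes of $C'$ so that conditions (1) and (3) survive each iteration; condition (2) is then handled cheaply via Lemma \ref{lem:condition2}. Since this is precisely the construction already carried out in \cite{paper1}, I would present steps (1)–(5) as above and refer to \cite{paper1} for the verification of the local model, emphasizing only the point flagged in the remark preceding the statement: a ramification point of index $e$ (branch point of multiplicity $d$) of $\pi$ gets traded for $e-1$ ramification points of index $2$ (respectively $d$ branch points of multiplicity $1$) of $\pi'$.
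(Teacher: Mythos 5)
Your strategy (turn each smooth point $q$ with $d_\pi(q)\geq 2$ into a node of the base, attach rational tails over it carrying the branching, match ramification indices across the new nodes) is the standard construction, and it is essentially what the paper delegates to \cite[Thm.~3.3(a)]{paper1} -- the paper offers no argument beyond that citation. However, your step (3) contains a genuine gap: you assert that after the surgery the base ``still satisfies condition (2)'', and that is precisely the point that can fail. When the branch point $q$ becomes a node, the component $Z$ of $B$ containing $q$ loses $d_\pi(q)\geq 2$ units of smooth branch multiplicity and gains only one intersection point, so the quantity in condition (2) for $Z$ drops by $d_\pi(q)-1\geq 1$; Lemma~\ref{lem:condition2} only reduces the verification to components, it does not make it true. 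Concretely, take $C=B=\Pbb^1$ and $\pi$ given by $x\mapsto x^3$: this is quasi admissible (two branch points, each of multiplicity $2$, total $4\geq 3$), but after both of them are converted into nodes the original component of $B$ carries two nodes and no smooth branch points, so it violates condition (2), and no amount of iterating your local move repairs it.

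Worse, in this example the defect cannot be repaired at all within the statement's constraints: for any admissible $\pi'\colon C'\rightarrow B'$ of degree $3$ with $\pi'|_C=\pi$ and $B\subset B'$, the two points of total ramification must lie over nodes of $B'$ (conditions (1) and (4)); $C$ is the only component over $B$ for degree reasons, so condition (2) forces a third node of $B$ at a non-branch point; its fibre consists of three unramified points of $C$, each of which must carry a component of $C'$ mapping to the new base component with local index $1$, and since $C'$ must have genus $0$ these three components are distinct and hence all isomorphisms onto that base component; that component then has no smooth branch points and needs three nodes itself, and the argument propagates outward forever along what must be a finite tree -- a contradiction. So the theorem, read verbatim for arbitrary quasi admissible covers, needs a hypothesis excluding components of $B$ with fewer than three distinct special points (nodes plus branch points counted \emph{without} multiplicity); this is automatic in the situations where the paper uses the result (e.g.\ a smooth curve of genus $g\geq 1$ has at least $2+2g/(k-1)>2$ distinct branch points for any degree-$k$ map to $\Pbb^1$, and the extra hypotheses in Theorem~\ref{thm:gluing} play exactly this stabilizing role), but a complete write-up of your steps (2)--(4) must either impose such a hypothesis or supply an additional construction; this is the missing idea in your proposal.
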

\begin{proof}
Cf. \cite[Thm 3.3(a)]{paper1}.
\end{proof}


The following theorem describes a procedure to glue quasi admissible covers in a similar way to what was done in \cite[Proposition 3.7]{paper1}. The main difference is that, since here we allow the covers to be quasi admissible, 
obtaining condition (2) for the glued cover will take a few extra hypotheses.

\begin{theorem}\label{thm:gluing}
Let $C$ be a nodal curve and let $Y_1,\ldots,Y_r\subset C$ be connected subcurves such that $Y_1\cup\ldots\cup Y_r=C$ and $Y_i\cap Y_{i'}$ is finite, possibly empty, for $1\leq i\neq i'\leq r$. For $i=1,\ldots,r$  let: 
\begin{itemize}
\item $\pi_i\:Y_i'\rightarrow B_i$ be a quasi admissible cover of degree $k_i$, where $Y_i'$ is stably equivalent to $Y_i$;
\item $n_{i,1},\ldots,n_{i,\delta_i}\in C$ be the intersection points between $Y_i$ and $Y_i^c$;
\item $n_{j}^{(i)}\in Y_i$ be the branch over $n_{i,j}$, and let $\nlinha{i}_j\in Y_i'$  be a smooth point lying over $n_j^{(i)}$, for $j=1,\ldots,\delta_i$.
\end{itemize}
Set $S=\{n_{i,j}\ |\ i=1,\ldots,r,\; j=1,\ldots,\delta_i\}$ and,
for each $n\in S$, set
$$e_n=\min\{ e_{\pi_{i_0}}(\nlinha{i_0}_{j_0}), e_{\pi_{i_1}}(\nlinha{i_1}_{j_1})\ | \ n=n_{i_0,j_0}=n_{i_1,j_1}\}.$$
If for every $i=1,\ldots,r$ we have $\pi_i(\nlinha{i}_1)=\ldots=\pi_i(\nlinha{i}_{\delta_i})$ 
then there exists a finite map $\pi\: C'\rightarrow B$ of degree 
$$k_1+\ldots+k_r-\sum_{n\in S}e_n$$ satisfying conditions (1) and (3) of a quasi admissible cover, 
where $C'$ is stably equivalent to $C$, contains $Y_i'$ as a subcurve, $B$ contains $B_i$  as a subcurve, and $\pi|_{Y_i'}=\pi_i$.

Moreover if, for each $i=1,\ldots,r$, we have $g(Y_i)\neq 0$ or $k_i\geq 3$, and 
if the  component of $B_i$ containing $q_i$ meets its complement in at least 2 points or if it
contains at least two smooth branch points of $\pi_i$ counted with multiplicity,
then $\pi$ is a quasi admissible cover.
 In this case,  $C$ is $(k_1+\ldots+k_r-\sum_{n\in S}e_n)$-gonal.
\end{theorem}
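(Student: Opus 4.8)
The plan is to adapt the gluing construction of \cite[Thm.\ 3.7]{paper1} essentially verbatim, and then carefully verify that the extra hypotheses in the ``moreover'' part suffice to recover condition (2). First I would build the target curve $B$ by gluing the $B_i$ along their distinguished points: for each $i$ we have $q_i:=\pi_i(\nlinha{i}_1)=\ldots=\pi_i(\nlinha{i}_{\delta_i})\in B_i$, and since every $B_i$ is rational of genus $0$, we may identify all the $q_i$ to a single point $q$, obtaining a nodal curve $B$ of genus $0$ that contains each $B_i$ as a subcurve. Simultaneously I would construct $C'$ from the curves $Y_i'$: over each node $n=n_{i_0,j_0}=n_{i_1,j_1}\in S$, the fibers $\pi_{i_0}^{-1}(q_{i_0})\cap(\text{points over }n)$ and $\pi_{i_1}^{-1}(q_{i_1})\cap(\text{points over }n)$ contain the marked points $\nlinha{i_0}_{j_0}$ and $\nlinha{i_1}_{j_1}$, of ramification indices $a:=e_{\pi_{i_0}}(\nlinha{i_0}_{j_0})$ and $b:=e_{\pi_{i_1}}(\nlinha{i_1}_{j_1})$ with $e_n=\min\{a,b\}$; I glue these two points of $Y_{i_0}'$ and $Y_{i_1}'$ to a node of $C'$. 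Where $a\neq b$, say $a<b=e_n$ is false—more carefully, when the indices differ one inserts over $q$ an extra rational chain in $B$ to absorb the discrepancy, exactly as in \cite{paper1}, so that condition (3) holds at the new node with a common ramification index. The map $\pi$ is then defined to agree with $\pi_i$ on each $Y_i'$; its degree over a general point of the component of $B$ through $q$ is $\sum_i k_i$ minus, for each glued node, the overlap $e_n$ (the two sheets of ramification index $\le e_n$ are identified rather than counted twice), giving degree $k_1+\ldots+k_r-\sum_{n\in S}e_n$.

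Next I would check conditions (1) and (3). Condition (3) holds at the newly created nodes by the construction above (and at the old nodes of the $Y_i'$ because each $\pi_i$ already satisfied it); the inserted rational chains are built precisely so that both branches over each new node share a ramification index. Condition (1), $\pi^{-1}(B^{\mathrm{sing}})=(C')^{\mathrm{sing}}$, follows since the new nodes of $B$ all lie over $q$ (or on the inserted chains), the new nodes of $C'$ all lie over them, and away from these points $\pi$ restricts to the $\pi_i$, which satisfy (1). Stable equivalence of $C'$ to $C$ is immediate: $C'$ is obtained from $\bigcup Y_i'$ (each stably equivalent to $Y_i$) by gluing along points lying over the $n_{i,j}$, possibly inserting smooth rational chains meeting the rest in two points, all of which are contracted by the stable-equivalence map.

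The main obstacle is the ``moreover'' part: verifying condition (2) for $\pi$, i.e.\ that no subcurve $Z\subset B$ violates it. By Lemma \ref{lem:condition2} it suffices to check irreducible components $Z$ of $B$. If $Z$ is a component of some $B_i$ not through $q_i$, then $Z\cap Z^c$ and the branch-point contributions are the same as they were inside $B_i$, so condition (2) for $\pi_i$ gives it for $\pi$. If $Z$ is the component $B_i^0$ of $B_i$ containing $q_i$, then in $B$ it has gained an extra node at $q$ (joining it to the other $B_{i'}^0$'s), so $|Z\cap Z^c|$ has increased by at least $1$; combined with the hypothesis that either $B_i^0$ already met its complement in $B_i$ in $\ge 2$ points, or carried $\ge 2$ smooth branch points of $\pi_i$ counted with multiplicity, we get $|Z\cap Z^c|+\sum_{q'\in Z\cap B^{\mathrm{sm}}}d_\pi(q')\ge 3$. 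The remaining delicate case is an inserted rational chain or a component $B_i^0$ for which $\pi_i$ had degree $k_i\le 2$ and $g(Y_i)=0$—but these are exactly excluded by the hypothesis $g(Y_i)\neq 0$ or $k_i\ge 3$ (compare Lemma \ref{lem:ratldeg2}), together with the placement of branch points on the inserted chains coming from Theorem \ref{thm:quasitoadmiss}. Once condition (2) is secured, $\pi\colon C'\to B$ is a quasi admissible cover with $C'$ stably equivalent to $C$; by Theorem \ref{thm:quasitoadmiss} it extends to an admissible cover of the same degree on a further stably equivalent curve, and then Theorem \ref{HarrMum} yields that $C$ is $(k_1+\ldots+k_r-\sum_{n\in S}e_n)$-gonal, completing the proof.
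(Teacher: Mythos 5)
Your construction is not the paper's, and as described it breaks down at the main step. The paper never identifies the points $q_1,\ldots,q_r$ to a single point of $B$: it inserts a \emph{new} rational component $B'$ of $B$ meeting each $B_i$ at $q_i$, and—this is the heart of the construction—it simultaneously inserts rational components into $C'$ mapping onto $B'$: for each glued node a copy of $\mathbb{P}^1$ joining the two marked points and mapping to $B'$ with degree $\max\{a,b\}$, ramified to orders $a$ and $b$ at those points; for each remaining point $m^{(i)}_j$ of the fibre $\pi_i^{-1}(q_i)$ a copy of $\mathbb{P}^1$ mapping to $B'$ with degree $\lambda^{(i)}_j$, totally ramified at $m^{(i)}_j$; and copies of the $B_t$ attached over the other preimages of the $q_t$ so that the degree is constant. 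Your version omits all of this, and the omissions are fatal: (a) for $r\geq 3$, identifying all the $q_i$ creates a point lying on at least three branches, so your $B$ is not a nodal curve; (b) even for $r=2$, condition (1) fails at the points of $\pi_i^{-1}(q_i)$ other than the marked ones, which in your $C'$ are smooth points mapping to the node $q$; (c) condition (3) fails at a glued node whenever the two ramification indices differ, and ``inserting an extra rational chain in $B$ to absorb the discrepancy'' cannot repair this—one must insert components of $C'$ with prescribed ramification over a component of $B$, which is precisely the step you skipped; (d) with nothing added to $C'$, your map has degree $k_1$ over $B_1$ and $k_2$ over $B_2$, so it is not a cover of one well-defined degree, and the asserted degree $k_1+\cdots+k_r-\sum_{n\in S}e_n$ (which in the paper is the degree over the new component $B'$) is not achieved by your gluing.

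The treatment of condition (2) is also misaligned with what the hypotheses are actually for. In the paper, the hypothesis $g(Y_i)\neq 0$ or $k_i\geq 3$ is used to show that the whole subcurve $B_i\subset B$ does not violate (2): since $q_i$ becomes a node of $B$, its multiplicity $d_{\pi_i}(q_i)$ no longer counts, and one needs $b_i-d_{\pi_i}(q_i)\geq 2g(Y_i)+k_i-1\geq 2$ remaining smooth branch points; the second hypothesis then handles the component of $B_i$ through $q_i$, roughly as you argue. But there remains a case invisible in your setup: for $r=2$ the new component $B'$ itself can violate (2), exactly when all inserted maps to $B'$ are unramified away from the nodes (matching indices and all $\lambda$'s equal to $1$); the paper resolves this by contracting $B'$ together with the components of $C'$ lying over it and checking that the contracted map satisfies (1)–(3). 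Your appeal to Theorem \ref{thm:quasitoadmiss} about ``placement of branch points on the inserted chains'' plays no role here—that theorem is only invoked at the very end, to pass from the quasi admissible cover to an admissible one before applying Theorem \ref{HarrMum}. So the proposal has genuine gaps in the main construction, not just in bookkeeping.
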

\begin{proof}
The proof is similar to that of \cite[Thm. 3.7]{paper1}. 
We proceed as follows.

For $i=1,\ldots,r$ let $q_{i}:=\pi_{i}({n_1'}^{\hspace{-.02in}(i)})=\ldots=\pi_{i}({n_{\delta_i}'}^{\hspace{-.04in}(i)})$. 
For simplicity, for $j=1,\ldots,\delta_i$ we set $e_j^i = e_{\pi_{i}}(\nlinha{i}_{j})$ to be
the ramification index of $\pi_i$ at  ${n_j'}^{(i)}$. Then
\[
(\pi_{i})^{-1}(q_{i})=
e_{1}^{i}{n_1'}^{(i)} +\ldots+e_{\delta_i}^{i}{n_{\delta_i}'}^{\hspace{-.04in}(i)}+
\lambda_{1}^{i}\mi_1+\ldots+\lambda_{u_i}^{i}\mi_{u_i},
\]
where $\mi_1,\ldots,\mi_{u_i}\in Y_{i}'$. We glue (see Figure \ref{fig:gluing1}):
\begin{itemize}
\item  a copy of $\mathbb{P}^{1}$, denoted by $B'$, passing through
$B_{1},\ldots,B_r$ at $q_{1},\ldots,q_r$, respectively, and thus linking the curves.  Denote by $B$ the genus-$0$ curve thus obtained;

\item whenever $n_{i_0,j_0}=n_{i_1,j_1}$, a copy of $\mathbb{P}^{1}$ passing through $Y_{i_0}'$ and $Y_{i_1}'$ at 
${n_{j_0}'}^{\hspace{-.04in}(i_0)}$ and ${n_{j_1}'}^{\hspace{-.04in}(i_1)}$,  and thus linking both curves,
mapping to $B'$ via a map of degree 
$\max\{ e_{j_0}^{i_0}, e_{j_1}^{i_1}\}$
 ramified to order $e_{j_0}^{i_0}$ at ${n_{j_0}'}^{\hspace{-.04in}(i_0)}$ 
and to order $e_{j_1}^{i_1}$ at ${n_{j_1}'}^{\hspace{-.04in}(i_1)}$, unramified over $q_t$ for every $t\neq i_0,i_1$ and simply ramified elsewhere, where  $1\leq i_0,i_1,t\leq r$, $1\leq j_s\leq \delta_{i_s}$ and $s=0,1$ (call this copy $L_{j_0,j_1}^{i_0,i_1}$);

\item 
 a copy of $B_t$, mapping to  $B_t$ isomorphically,
at each point $m$ of $L_{j_0,j_1}^{i_0,i_1}$, distinct from ${n_{j_0}'}^{\hspace{-.04in}(i_0)}$ and ${n_{j_1}'}^{\hspace{-.04in}(i_1)}$,  
lying over $q_t$  by the map $L_{j_0,j_1}^{i_0,i_1}\rightarrow B'$, 
where  $1\leq i_0,i_1,t\leq r$ and  $1\leq j_s\leq \delta_{i_s}$, for $s=0,1$;

\item a copy of $\mathbb{P}^1$ at $\mi_{j}$, mapping to $B'$ via a degree-$\lambda_{j}^{i}$ map totally ramified at $\mi_j$,
unramified over $q_t$ for every $t\neq i$, 
and simply ramified elsewhere, 
for each $1\leq i,t\leq r$ and $1\leq j\leq u_i$ (call this copy $L_j^i$);

\item a copy of $B_t$ 
at each point $m$ of $L_j^i$ lying over $q_t$ by the map $L_{j}^i\rightarrow B'$, mapping to  $B_t$ isomorphically,
for each $1\leq i,t\leq r$ with $t\neq i$, and $1\leq j\leq u_i$.
\end{itemize}

\begin{figure}[h!]
\centering
\includegraphics[height=2.6in]{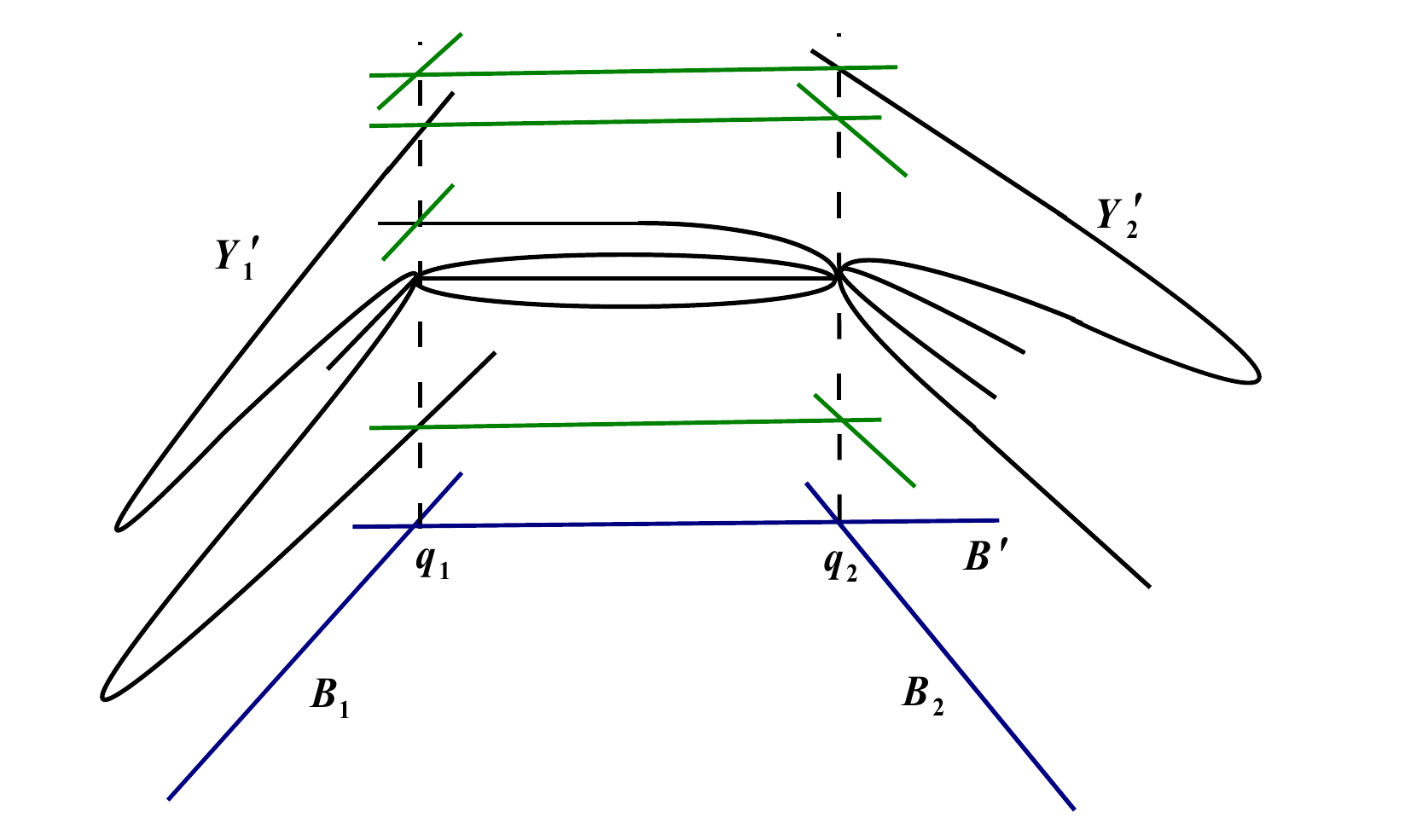}
\caption{Theorem \ref{thm:gluing} for $r=2$, $k_1=5$, $k_2=5$, $e_1^1=3$ and $e_2^1=4$}
\label{fig:gluing1}
\end{figure}

We thus obtain a nodal curve $C^{\prime}$ stably equivalent to $C$
 and a map $\pi\:C^{\prime}\rightarrow B$ given by $\pi_{i} $ when
restricted to $Y_{i}'$, and by the maps described above when restricted to the
added rational components of $C^{\prime}$. 
By construction, $\pi$ has degree $k=k_{1}+\ldots+k_{r}-\sum_{n\in S}e_n$ and satisfies conditions (1) and (3) of a quasi admissible cover.

Now we consider condition (2). Note that since $\pi_i$ is quasi admissible and $Y_i'$ is stably equivalent to $Y_i$, then $B_i$ has $b_i=b(\pi_i)=2g(Y_i)+2k_i-2$ branch points counted with multiplicity. Now, $q_i$ has multiplicity 
$d_i=d_{\pi_i}(q_i)\leq k_i-1$ 
in $B_i$ and  there are 
$b_i-d_i$ marked points in $B_i$ distinct from $q_i$, counted with multiplicity. 
Thus $B_i$ would violate condition (2) for $\pi$ if and only if $b_i-d_i\leq 1$.
Now, 
$$b_i-d_i\geq b_i-k_i+1=2g(Y_i)+k_i-1.$$ 
Since, by hypothesis, we have $g(Y_i)\geq 1$ or $k_i\geq 3$, then $b_i-d_i\geq 2$, showing that $B_i$ does not violate condition (2) for $\pi$.

Note moreover that if $Z$ is a component of $B_i$, then $Z$ also does not violate condition (2) for $\pi$. 
Indeed, if $Z$ does not contain $q_i$, then the inequality in condition (2) is equal for both $\pi$ and $\pi_i$. 
Moreover, if $Z$ does contain $q_i$ 
then, by hypothesis,
$Z$ meets its complement in at least 2 points or 
contains at least two smooth branch points of $\pi_i$ counted with multiplicity,
and again we are done.


Now we need only to check that $B'$ does not violate condition (2) for $\pi$. This is clear if $r\geq 3$. Now assume $r=2$, then $B'$ violates condition (2) for $\pi$ if and only if there are no marked points in $B'$ other than the nodes $q_1$ and $q_2$. This happens when all the maps to $B'$ have no ramification points other than the nodes $\nlinha{i}_j$. 
In the case of the maps $L_j^i\rightarrow B'$ defined in the fourth step above, this is the case  if and only if the map is an isomorphism.
In the case of maps $L_{j_0,j_1}^{i_0,i_1}\rightarrow B'$ defined in the second step above, this is the case if and only if we have
$e_{j_0}^{i_0} = e_{j_1}^{i_1}$, that is, the  ramification index of $\pi_{i_0}$ at   ${n_{j_0}'}^{\hspace{-.04in}(i_0)}$ is the same as the ramification index of $\pi_{i_1}$ at  ${n_{j_1}'}^{\hspace{-.04in}(i_1)}$. 
Hence, if  $B'$  violates condition (2) for $\pi$, then contracting $B'$ and the rational components of $C'$ mapping to it, we get a finite map $\pi'\:C''\rightarrow B''$ that satisfies conditions (1) and (2) and, since the ramification indexes agree on the branches of the nodes, we also have condition (3), see Figure (\ref{fig:gluing2}). 

\begin{figure}[h!]
\centering
\includegraphics[height=2.6in]{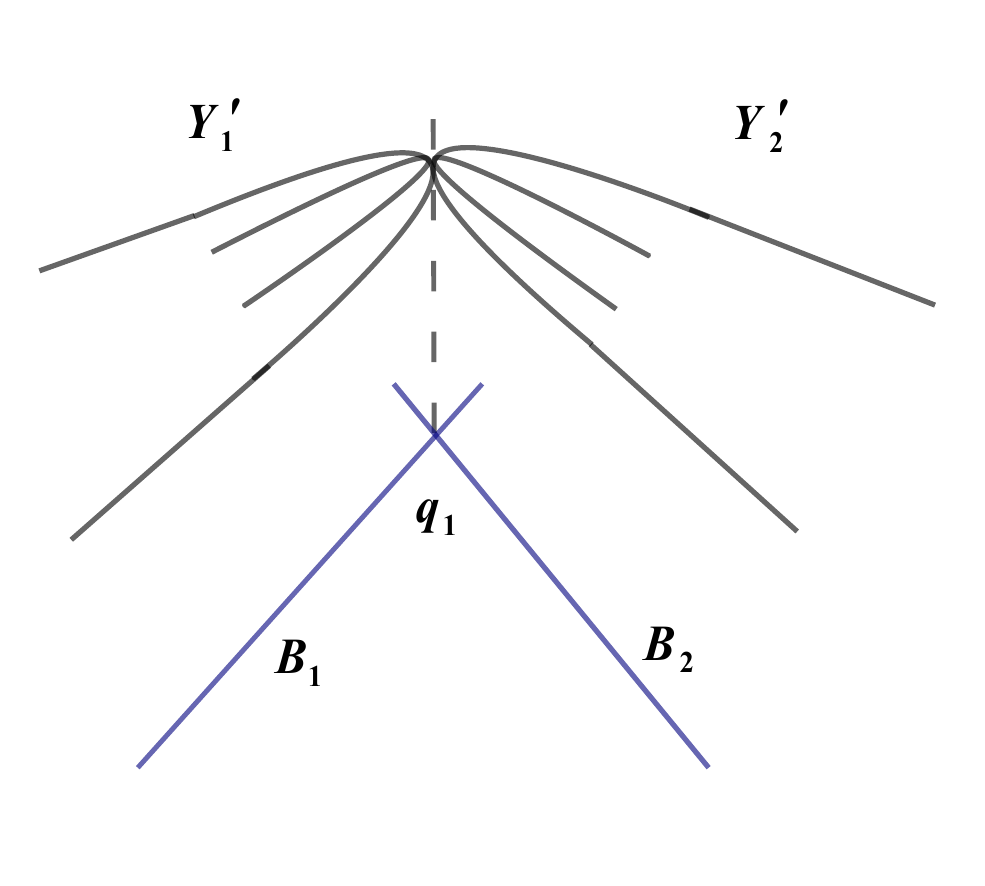}
\caption{Theorem \ref{thm:gluing} for $r=2$, $k_1=k_2=3$, $e_1^1=e_2^1=3$}
\label{fig:gluing2}
\end{figure}

Finally, the last assertion follows from  Theorem \ref{thm:quasitoadmiss}.
\end{proof}

We point out that  in order to glue two quasi admissible covers $\pi$ and $\tilde \pi$, 
we could first consider the admissible covers $\pi'$ and $\tilde\pi'$ given by Theorem \ref{thm:quasitoadmiss} and then glue $\pi'$ to $\tilde\pi'$ using \cite[Theorem 3.7]{paper1}. 
However, the resulting admissible cover will usually have degree bigger than the one obtained if we glue $\pi$ and $\pi'$ directly with the procedure of Theorem \ref{thm:gluing}. 
In fact, the degree will be the same in both constructions if and only if the minimum index $e_n$ is equal to $1$ for every node $n$.
We will see on Proposition \ref{prop:converse} that, in the case of two-component stable curves, the construction in Theorem \ref{thm:gluing} is optimal, that is, the degree of the map is the minimal possible degree.

For the sake of completeness, we remark that the constructions of \cite[Theorem 3.4]{paper1} gluing two points of the same cover can also be carried out 
for a quasi admissible cover $\pi$, with the appropriate changes, as in the previous proposition. However, in this case there is no change in the degree of the cover obtained if we first use Theorem \ref{thm:quasitoadmiss} to obtain an admissible cover $\pi'$, and then use  \cite[Theorem 3.4]{paper1} to glue points of $\pi'$.

\section{Two-component stable curves}\label{sec:2comp}

We define the \emph{gonality} of a stable curve $C$, denoted by $\gon(C)$, as the smallest $k$ such that $C$ is $k$-gonal.  It is a difficult problem to try to obtain the gonality of a stable curve from the gonality of its components. We do however have upper bounds. 

\begin{proposition}\label{prop:gonpp1}
Let $C$ be a stable curve.
\begin{enumerate}[(i)]
\item  Let $Y_1$ and $Y_2$ be stable subcurves of $C$ such that $Y_1\cup Y_2=C$ and $Y_1\cap Y_2$ is a finite set of $\delta$ nodes of $C$. Then 
$$\gon(C)\leq \gon(Y_1)+\gon(Y_2)+\delta-2.$$

\item Let $C_1,\ldots, C_p$ be the components of $C$ and let $\delta$ be the number of nodes of $C$ in the intersection of any two components of $C$. Then 
$$\gon(C)\leq \gon(C_1)+\ldots+\gon(C_p)+\delta-2(p-1).$$
\end{enumerate}
\end{proposition}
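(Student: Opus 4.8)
The plan is to deduce both statements from Theorem \ref{thm:gluing}, essentially by taking the $\pi_i$ to be gonality-realizing covers and choosing all the branch points over the nodes to have ramification index $1$, so that in the notation of Theorem \ref{thm:gluing} every $e_n=1$. First I would prove part (i). Let $k_i=\gon(Y_i)$ and pick, for each $i=1,2$, a quasi admissible cover $\pi_i\colon Y_i'\to B_i$ of degree $k_i$ with $Y_i'$ stably equivalent to $Y_i$; such a cover exists by Theorem \ref{HarrMum} together with Theorem \ref{thm:quasitoadmiss}. (A small point to address: $\gon(Y_i)$ is the gonality of the stable curve $Y_i$, but in $C$ the subcurve $Y_i$ carries extra marked points at the $\delta$ nodes; since we only use $Y_i$ as an abstract stable curve here, this is fine.) Since $B_i$ has genus $0$, we may compose with an automorphism of the relevant $\mathbb P^1$ to arrange that all the branches $\nlinha{i}_1,\ldots,\nlinha{i}_{\delta}$ lie over a single point $q_i\in B_i$, after first moving these $\delta$ points of $Y_i'$ to a common fiber — this is where one must be slightly careful, since a priori the chosen cover need not send all node-branches to the same point. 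The clean way around this is to invoke Theorem \ref{thm:gluing} with $r$ equal to the number of components rather than forcing it by hand; but for part (i) the cleaner route is: if necessary, replace $\pi_i$ by a cover for which the node-branches are unramified and lie over a common point, which can always be done by attaching rational tails (as in Theorem \ref{thm:quasitoadmiss}) without changing the degree. With $\pi_i\in$ such a configuration, each $e_{\pi_i}(\nlinha{i}_j)=1$, hence $e_n=1$ for each of the $\delta$ nodes $n$, and $\sum_{n\in S}e_n=\delta$.

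Next I would check the hypotheses of the ``moreover'' part of Theorem \ref{thm:gluing}. Since $Y_1,Y_2$ are stable subcurves, each has $g(Y_i)\ge 1$ or $k_i=\gon(Y_i)\ge 2$; the only borderline case is $g(Y_i)=0$ with $k_i\le 2$, i.e. $Y_i$ rational, which would force $k_i=\gon(\mathbb P^1)$-type behaviour — but a stable rational subcurve has at least three special points, so by Lemma \ref{lem:ratldeg2} and condition (2) we can arrange $k_i\ge 3$ or else handle it directly. Likewise the condition on the component of $B_i$ through $q_i$ (meeting its complement in $\ge 2$ points, or carrying $\ge 2$ branch points counted with multiplicity) can be secured by the Riemann--Hurwitz count \eqref{eq:RH}: $B_i$ has $b_i=2g(Y_i)+2k_i-2$ branch points with multiplicity, and since we arranged $q_i$ to be unramified, $d_{\pi_i}(q_i)=0$, so there are $b_i\ge 2$ branch points elsewhere whenever $g(Y_i)\ge 1$ or $k_i\ge 2$. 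Granting this, Theorem \ref{thm:gluing} produces a quasi admissible cover of degree $k_1+k_2-\delta$... wait — that is not $k_1+k_2+\delta-2$. The resolution is that one does \emph{not} route all $\delta$ node-branches through a single glued $\mathbb P^1$; instead Theorem \ref{thm:gluing} links $Y_1'$ and $Y_2'$ along $\delta$ separate bridge curves $L^{1,2}_{j,j}$, each contributing, and one must instead glue so that the degree is $k_1+k_2$ minus the \emph{number of nodes used to actually connect}, which is forced to be $2-1=1$ bridge if we want a connected curve only when $\delta=1$. For general $\delta$ I would instead apply Theorem \ref{thm:gluing} with a single $\mathbb P^1$ bridge $B'$ carrying both $q_1,q_2$ and let the remaining $\delta-1$ nodes be created by the construction's $L^{1,2}_{j,j}$-curves with $e_n=1$, giving total degree $k_1+k_2-\sum e_n$ where now $\sum e_n$ counts only...

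Here is the actual obstacle, and the honest way to present it: the degree bookkeeping in Theorem \ref{thm:gluing} subtracts $\sum_{n\in S}e_n$, but $S$ ranges over \emph{all} node-identifications and $\pi_i$ must send \emph{all} node-branches to one point $q_i$, so the subtraction is $\ge\delta$ (it equals $\delta$ when all $e_n=1$), yielding degree $k_1+k_2-\delta$, which is \emph{stronger} than claimed only when $\delta\le 2$ and weaker otherwise — so Theorem \ref{thm:gluing} as stated does not directly give (i) for $\delta\ge 3$. Therefore the real plan for (i) with $\delta\ge 2$ is different: pick a gonality cover $\pi_i$ of $Y_i$ \emph{without} requiring the node-branches to share a fiber, let $q_j^{(i)}=\pi_i(\nlinha{i}_j)$ be $\delta$ (possibly distinct) points, and glue using $\delta-1$ bridge $\mathbb P^1$'s plus one bridge that is a $\mathbb P^1$ chain reconciling the images — equivalently, choose an isomorphism $B_1\xrightarrow{\sim}B_2$ matching $q_j^{(1)}\leftrightarrow q_j^{(2)}$ on the nose, attach them along a chain, and observe that each of the $\delta$ identifications of node-branches where the ramification indices \emph{already agree} costs nothing beyond the first. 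I would formalize this by iterating the single-node case: first glue $Y_1'$ to $Y_2'$ at $n_1$ to get a curve of gonality $\le k_1+k_2-1$ by Theorem \ref{thm:gluing} with $r=2$, $S=\{n_1\}$, $e_{n_1}=1$; then each further node $n_j$ ($j\ge 2$) is a self-gluing of the resulting cover at two points lying over a common point, which by the remark following Theorem \ref{thm:gluing} (gluing two points of the same cover, \`a la \cite[Theorem 3.4]{paper1}) raises the degree by at most $1$ each; total $\le k_1+k_2-1+(\delta-1)=k_1+k_2+\delta-2$, as claimed.

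For part (ii), I would induct on the number of components $p$, using part (i) at each step: write $C=Y_1\cup Y_2$ where $Y_1=C_1\cup\dots\cup C_{p-1}$ and $Y_2=C_p$, with $Y_1\cap Y_2$ a set of $\delta_p$ nodes; by induction $\gon(Y_1)\le \gon(C_1)+\dots+\gon(C_{p-1})+\delta'-2(p-2)$ where $\delta'$ is the number of nodes among the first $p-1$ components, and $\delta'+\delta_p=\delta$; then part (i) gives $\gon(C)\le\gon(Y_1)+\gon(C_p)+\delta_p-2$, and summing yields the bound. I would need the decomposition $C_1\cup\dots\cup C_{p-1}$ to be connected and a stable subcurve — connectedness may fail, in which case I would reorder the components so that partial unions stay connected (possible since $C$ is connected), and stability of a proper subcurve with its induced marked points is automatic for nodal curves. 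The main obstacle throughout is the one flagged above: reconciling the $\sum_{n\in S}e_n$ term in Theorem \ref{thm:gluing} (which is $\ge$ number of nodes) with the weaker ``$+\delta-2$'' bound, which forces the inductive/iterative approach via single-node gluing plus same-cover point-gluing rather than one application of Theorem \ref{thm:gluing}.
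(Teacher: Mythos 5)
Your final strategy for (i) --- glue the two gonality covers at a single node via Theorem \ref{thm:gluing} (degree at most $\gon(Y_1)+\gon(Y_2)-e_{n_1}\leq \gon(Y_1)+\gon(Y_2)-1$, the ``moreover'' hypotheses holding because a stable $Y_i$ has $g(Y_i)\geq 2$ and condition (2) for an admissible $\pi_i$ supplies the needed special points on the component through $q_i$), then attach the remaining $\delta-1$ pairs of node-branches one at a time by the self-gluing construction of \cite[Thm 3.4]{paper1}, each step costing at most one in degree --- is sound in outline. Note, though, that the paper itself does not reprove the proposition: its proof is a direct citation of \cite[Thm 3.9 and Cor 3.10]{paper1}, and your iteration is essentially the argument behind those cited results, reassembled from Theorem \ref{thm:gluing} and the remark following it. Two assertions made on the way should simply be deleted: you cannot ``attach rational tails'' to force all $\delta$ node-branches of a gonality cover onto a common fibre at no cost in degree (if that were possible, the hypotheses $\Pi_i\neq\emptyset$ and the whole bookkeeping of $e(\pi_1,\pi_2)$ in Theorem A would be pointless); and the claim that $k_1+k_2-\delta$ is ``weaker'' than $k_1+k_2+\delta-2$ for $\delta\geq 3$ is backwards --- it is the smaller, hence stronger, bound; the obstruction is achievability, not strength. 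Likewise, in the iteration the two branches of $n_j$ need not lie over a common point of the base; in general you are invoking \cite[Thm 3.4(a)]{paper1} (degree $+1$), with part (b) only when the images happen to agree.

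The genuine gap is in part (ii), and already in the base case $p=2$ of your induction. Part (i), as stated and as you prove it, applies to \emph{stable} subcurves, and $\gon$ is defined in this paper only for stable curves; but the partial unions $Y_1=C_1\cup\ldots\cup C_{p-1}$ --- and the components $C_i$ themselves --- need not be stable (for instance a rational component of $C$ whose three special points do not all lie in $Y_1$, or a genus-one piece attached at a single node). Your remark that ``stability of a proper subcurve with its induced marked points is automatic'' concerns \emph{pointed} stability, which is not what part (i) or the paper's definition of gonality requires, so the inductive step does not literally go through with the tools you allow yourself. You would need either to extend (i) to arbitrary connected nodal subcurves, with the more general notion of gonality via admissible covers used in \cite{paper1} --- which is exactly what the paper's citation of \cite[Thm 3.9, Cor 3.10]{paper1} buys --- or to rerun the gluing argument directly on the pointed-stable pieces. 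Relatedly, components with self-nodes are not addressed: such nodes are excluded from $\delta$, so one must say what $\gon(C_i)$ means for a non-smooth component and check the gluing is unaffected. These are precisely the technicalities the paper sidesteps by citing its predecessor.
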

\begin{proof}
Follows from \cite[Theorem 3.9 and Corollary 3.10]{paper1}.
\end{proof}

The bounds in Proposition \ref{prop:gonpp1} are obtained from the admissible covers constructed in \cite{paper1}.  As we pointed out in the end of the previous section, the construction on Theorem \ref{thm:gluing} can produce better bounds. We will restrict ourselves to two-component stable curves.

\begin{proposition}\label{prop:gluing2}
Let $C$ be a stable curve with smooth components $C_1$ and $C_2$ 
and nodes $n_1,\ldots,n_\delta$. 
For each $j=1,\ldots, \delta$ let $n_j^{(1)}\in C _1$ and $n_j^{(2)}\in C_2$ be the branches of $n_j$.
Assume that, for each $i=1,2$, there exists a finite map $\pi_i\:C_i\rightarrow \mathbb{P}^1$ of degree $k_i$ 
such that 
$\pi_i(n_1^{(i)})=\ldots=\pi_i(n_\delta^{(i)})$. 
Then $C$ is $(k_1+k_2-(e_1+\ldots+e_\delta))$-gonal, where
$e_j=\min\{e_{\pi_1}(n_j^{(1)}),e_{\pi_2}(n_j^{(2)})\}$, for $j=1,\ldots,\delta$
\end{proposition}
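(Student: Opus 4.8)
The plan is to deduce Proposition \ref{prop:gluing2} directly from Theorem \ref{thm:gluing} by taking $r=2$, $Y_1=C_1$, $Y_2=C_2$, with $Y_i'=C_i$ (so no stable modification is needed on the components since they are already smooth), $B_i=\mathbb{P}^1$, and $\nlinha{i}_j=n_j^{(i)}$. The hypothesis $\pi_i(n_1^{(i)})=\ldots=\pi_i(n_\delta^{(i)})$ is exactly the hypothesis required in Theorem \ref{thm:gluing} to run the gluing construction, and the set $S$ becomes $\{n_1,\ldots,n_\delta\}$ with $e_{n_j}=\min\{e_{\pi_1}(n_j^{(1)}),e_{\pi_2}(n_j^{(2)})\}=e_j$. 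Thus Theorem \ref{thm:gluing} already produces a finite map $\pi\colon C'\rightarrow B$ of degree $k_1+k_2-(e_1+\ldots+e_\delta)$ satisfying conditions (1) and (3) of a quasi admissible cover, with $C'$ stably equivalent to $C$.

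The remaining work is to verify the extra hypotheses in the second paragraph of Theorem \ref{thm:gluing} so as to upgrade $\pi$ to an honest quasi admissible cover and conclude $k$-gonality. First I would handle the condition ``$g(Y_i)\neq 0$ or $k_i\geq 3$'': since $C$ is a \emph{stable} curve with exactly two components, each component $C_i$ that is rational must meet $C_i^c$ in at least three points, so $\delta\geq 3$; but then $\pi_i$ identifies the $\delta$ distinct points $n_1^{(i)},\ldots,n_\delta^{(i)}$ to a single point $q_i$, forcing $k_i=\deg\pi_i\geq\delta\geq 3$. Hence in all cases $g(C_i)\neq 0$ or $k_i\geq 3$, as needed. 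Next I would check the condition on the component of $B_i=\mathbb{P}^1$ containing $q_i$ (which is all of $B_i$): it meets its complement in zero points, so I must show $B_i$ contains at least two smooth branch points of $\pi_i$ counted with multiplicity. By Riemann--Hurwitz \eqref{eq:RH}, $\pi_i$ has $b(\pi_i)=2g(C_i)+2k_i-2$ branch points counted with multiplicity, while $q_i$ absorbs at most $d_{\pi_i}(q_i)\leq k_i-1$ of them; the leftover count is at least $2g(C_i)+k_i-1\geq 2$ using again $g(C_i)\neq 0$ or $k_i\geq 3$. So $B_i$ carries at least two branch points away from $q_i$ (or, if one insists on including $q_i$, certainly at least two total), and the hypothesis holds.

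With both extra hypotheses verified, Theorem \ref{thm:gluing} gives that $\pi$ is a quasi admissible cover and that $C$ is $(k_1+k_2-(e_1+\ldots+e_\delta))$-gonal, which is exactly the assertion. I do not anticipate a genuine obstacle here: the proposition is essentially a specialization of Theorem \ref{thm:gluing}, and the only content is the elementary observation that stability of $C$ together with the image-coincidence hypothesis forces the numerical conditions $g(C_i)\neq 0$ or $k_i\geq 3$ and the branch-point count, both of which follow from $\delta\geq 3$ in the rational case and from Riemann--Hurwitz. The mildest care needed is the edge case $\delta=1$ (then $g(C_i)\geq 1$ is automatic from stability if $C_i$ is rational it would meet the rest in one point only, violating stability, so indeed $g(C_i)\geq 1$) and $\delta=2$ (then a rational $C_i$ would meet $C_i^c$ in two points, again violating stability, so $g(C_i)\geq 1$); in every case the hypotheses of Theorem \ref{thm:gluing} are met.
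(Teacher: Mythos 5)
Your proposal is correct and follows the same basic route as the paper: both proofs reduce the statement to the gluing construction of Theorem \ref{thm:gluing} with $r=2$ and $Y_i'=C_i$, and your Riemann--Hurwitz count $b(\pi_i)-d_{\pi_i}(q_i)\geq 2g(C_i)+k_i-1\geq 2$ is literally the computation already carried out inside the proof of Theorem \ref{thm:gluing}, so that part you could simply quote rather than redo. The genuine difference is the treatment of the exceptional case $g(C_i)=0$ and $k_i\leq 2$: you eliminate it a priori by observing that stability of a two-component curve forces a smooth rational component to carry $\delta\geq 3$ nodes, and since the $\delta$ distinct branches lie in a single fibre of $\pi_i$ this gives $k_i\geq\delta\geq 3$; the paper instead keeps this case, uses stability only to exclude $\delta=1$, and for $\delta=2$, $k_i=2$ checks by hand that the glued map is still quasi admissible (the two ramification points of the degree-$2$ map on the rational component furnish the missing marked points on $B_i$), then applies Theorem \ref{thm:quasitoadmiss}. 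Your stability observation in fact shows that this residual case is vacuous under the stated hypotheses, so your argument is slightly more economical, while the paper's hands-on treatment is more robust in that it uses stability only minimally and would survive a weakening of the hypothesis toward semistable curves. Two points you should make explicit: before invoking Theorem \ref{thm:gluing} one needs each $\pi_i$ to be a quasi admissible cover, which holds by Lemma \ref{lem:ratldeg2} precisely because of the same dichotomy $g(C_i)\neq 0$ or $k_i\geq 3$ that you establish (conditions (1) and (3) are automatic for maps of smooth curves); and the two branch points must be counted away from $q_i$, since after gluing $q_i$ becomes a node of $B$ and no longer contributes as a smooth branch point in condition (2), so your parenthetical fallback ``at least two total including $q_i$'' should be dropped---fortunately your main count already excludes $q_i$.
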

\begin{proof}
Let $\pi\:C'\rightarrow B$ be the finite map constructed in Theorem \ref{thm:gluing}.
If for each $i=1,2$ we have either $g(C_i)\neq0$ or $k_i\geq3$, then the result follows directly from Theorem \ref{thm:gluing}, since by Lemma \ref{lem:ratldeg2}, in this case the maps $\pi_1$ and $\pi_2$ are quasi admissible, and satisfy the conditions of Theorem \ref{thm:gluing}.

Now assume $g(C_i)=0$ and $k_i\leq 2$ for some $i=1,2$. Then, since the branches $n_j^{(i)}$ are on the same fiber of $\pi_i$ for $j=1,\ldots,\delta$, we must have $\delta=1$ or  $\delta=2$. Now, if $\delta=1$ then $C$ is not stable. So we must have $\delta=2$ and $k_i=2$. Since $\pi_i$ is of degree 2, then $\pi_i$ is unramified at the points $n_1^{(i)}$ and $n_2^{(i)}$. Setting $B_i$ to be the image of $\pi_i$, we see that the ramification points of $\pi_i$ give two marked points on  $B_i$, and hence  $B_i$ does not violate condition (2) for $\pi$. Thus $\pi$ is quasi admissible and, by Theorem \ref{thm:quasitoadmiss}, the result is proven.
\end{proof}
 
In terms of gonality, this gives the following upper bound.

\begin{corollary}\label{cor:gonchar}
Let $C$ be a stable curve with smooth  components $C_1$ and $C_2$ 
and nodes $n_1,\ldots,n_\delta$. 
For each $j=1,\ldots, \delta$ let $n_j^{(1)}\in C _1$ and $n_j^{(2)}\in C_2$ be the branches of $n_j$. 
Let $\Pi_i$ be the set of 
degree-$\gon(C_i)$ maps $\pi_i\colon C_i\rightarrow \Pbb^1$ such that
$\pi_i(n_1^{(i)})=\ldots=\pi_i(n_\delta^{(i)})$, for $i=1,2$. 
For $\pi_1\in\Pi_1$ and $\pi_2\in\Pi_2$ let 
$$e(\pi_1,\pi_2)=\sum_{j=1}^\delta \min\{e_{\pi_1}(n_j^{(1)}),\, e_{\pi_2}(n_j^{(2)})\}.$$
If both $\Pi_1$ and $\Pi_2$ are non empty, then 
$$\gon(C)\leq\gon(C_1)+\gon(C_2)-\max\{e(\pi_1,\pi_2)\ | \ \pi_1\in\Pi_1,\; \pi_2\in\Pi_2  \},$$ 
\end{corollary}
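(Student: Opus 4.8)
The plan is to deduce Corollary \ref{cor:gonchar} directly from Proposition \ref{prop:gluing2} by a quantification over the finitely many relevant maps. First I would fix any pair $\pi_1 \in \Pi_1$ and $\pi_2 \in \Pi_2$; by definition of $\Pi_i$ these have degree $\gon(C_i)$ and satisfy the required condition that all branches $n_j^{(i)}$ lie in a single fiber, so the hypotheses of Proposition \ref{prop:gluing2} are met with $k_i = \gon(C_i)$. Applying that proposition gives that $C$ is $\bigl(\gon(C_1) + \gon(C_2) - e(\pi_1,\pi_2)\bigr)$-gonal, where $e(\pi_1,\pi_2) = \sum_{j=1}^\delta \min\{e_{\pi_1}(n_j^{(1)}), e_{\pi_2}(n_j^{(2)})\}$ matches the quantity named in the statement. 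Since being $k$-gonal is a condition preserved under increasing $k$ (any smoothing whose general fiber is $k$-gonal also exhibits $C$ as a limit of $k'$-gonal curves for $k' \geq k$, as those include the $k$-gonal ones), it follows that $\gon(C) \leq \gon(C_1) + \gon(C_2) - e(\pi_1,\pi_2)$.

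Next I would take the infimum over all admissible choices. The inequality $\gon(C) \leq \gon(C_1) + \gon(C_2) - e(\pi_1,\pi_2)$ holds for every $\pi_1 \in \Pi_1$ and $\pi_2 \in \Pi_2$, so it holds for the choice that makes the right-hand side smallest, i.e.\ the choice maximizing $e(\pi_1,\pi_2)$. Because $\Pi_1$ and $\Pi_2$ are assumed non-empty, this maximum is taken over a non-empty set; one should note that it is actually attained (the possible values of each ramification index $e_{\pi_i}(n_j^{(i)})$ lie in the finite set $\{1, \ldots, \gon(C_i)\}$, so $e(\pi_1,\pi_2)$ takes only finitely many values and the supremum is a maximum). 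This yields exactly
$$\gon(C) \leq \gon(C_1) + \gon(C_2) - \max\{e(\pi_1,\pi_2) \mid \pi_1 \in \Pi_1,\ \pi_2 \in \Pi_2\},$$
which is the claimed bound.

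There is essentially no obstacle here: the corollary is a formal consequence of Proposition \ref{prop:gluing2}, and the only points requiring a word of care are the monotonicity of $k$-gonality in $k$ (so that the sharpest individual bound controls $\gon(C)$) and the remark that the maximum over the pair of map families is well-defined. I would state the proof in two or three sentences, citing Proposition \ref{prop:gluing2} for the per-pair bound and then optimizing. If one wanted to be fully careful about the monotonicity point, one could instead observe that it is implicit in the setup: $\Pi_i$ consists of degree-$\gon(C_i)$ maps, and applying Proposition \ref{prop:gluing2} with these produces a genuine $k$-gonal structure on $C$ for the specific $k = \gon(C_1)+\gon(C_2)-e(\pi_1,\pi_2)$, whence $\gon(C)$, being the \emph{minimum} such $k$ over all constructions, is at most this value for each pair, hence at most the minimum over pairs.
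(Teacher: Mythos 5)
Your proposal is correct and follows essentially the same route as the paper: apply Proposition \ref{prop:gluing2} to a pair $(\pi_1,\pi_2)$ with $\pi_i\in\Pi_i$ of degree $\gon(C_i)$ and conclude $\gon(C)\leq \gon(C_1)+\gon(C_2)-e(\pi_1,\pi_2)$, then optimize over the pair (the paper simply fixes a maximizing pair at the outset, which is the same argument in a different order). Your remarks on monotonicity are unnecessary, since $C$ being $k$-gonal gives $\gon(C)\leq k$ directly from the definition of gonality as a minimum, but they do no harm.
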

\begin{proof}
Assume $\Pi_i$ is non empty and let $\pi_i\in \Pi_i$, for $i=1,2$ such that 
$$e(\pi_1,\pi_2)= \max\{e(\pi_1,\pi_2)\ | \ \pi_1\in\Pi_1,\; \pi_2\in\Pi_2  \}.$$ 
Then by Proposition \ref{prop:gluing2} there exists an admissible cover $\pi\: C'\rightarrow B$ for some $C'$ stably equivalent to $C$ such that
$$\gon(C)\leq \deg(\pi)=\gon(C_1)+\gon(C_2)-e(\pi_1,\pi_2).$$
\end{proof}

We will show in Theorem \ref{thm:goncharcpt} that, if $C$ is a two-component curve with one node, then the inequality obtained in the previous corollary is actually an equality. 
For this we first show in Proposition \ref{prop:converse} that, in some sense,  the construction in Theorem \ref{thm:gluing} is optimal.

\begin{lemma}\label{lem:Tj}
Let $C$ be a nodal curve with smooth components $C_1$ and $C_2$ and nodes $n_1,\ldots,n_\delta$. 
Let  $\pi\: C'\rightarrow B$ be a quasi admissible cover of degree $k$, where $C'$ is stably equivalent to $C$ with contraction map $\tau\: C'\rightarrow C$.

For $i=1,2$,  
$C_i$ is an irreducible component of $C'$ and we set  
$B_i=\pi(C_i)$,  
$\pi_i=\pi|_{C_i}\: C_i\rightarrow B_i$,
and 
$$Q_i=\{\pi_i(n_j^{(i)})\ |\ j=1,\ldots,\delta\},$$
where $n_j^{(i)}\in C_i$ is the branch of $n_j$ in $C_i$,
for each $j=1,\ldots, \delta$.
 
If $B_1\neq B_2$ then:
 
 \begin{enumerate}[(i)]
 \item There exist unique $q_1\in Q_1$ and $q_2\in Q_2$ such that  
either $q_1=q_2$ or  
$B_0=B_1^c\cap B_2^c$ is a rational chain containing $q_1$ and $q_2$, but not containing $B_1$ or $B_2$.

 \item Let $m\in C_i$ such that $\pi(m)=q_i$ and let $T=\tau^{-1}(\tau(m))$, for some $i=1,2$.
Then $\pi(T)$ contains $q_1$ and $q_2$ and, if $q_1\neq q_2$, then $\pi(T)$ contains $B_0$.
Moreover, for each component $B'$ of $B$, either $T\cap \pi^{-1}(B')$ is finite, possibly empty, or  
the restriction of $\pi$ to $T\cap \pi^{-1}(B')$ has degree at least $e_{\pi_i}(m)$.
 \end{enumerate}
\end{lemma}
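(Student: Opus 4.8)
The plan is to work component-by-component on the target curve $B$, using the fact that $B$ has genus $0$ so it is a tree of $\Pbb^1$'s, together with the structural restrictions imposed by conditions (1)--(3) of a quasi admissible cover. First I would set up the combinatorial picture: since $B_1\neq B_2$ and both are irreducible (hence each a single $\Pbb^1$), $B_1\cap B_2$ is either empty or a single node. The subcurve $B_0 = B_1^c\cap B_2^c$ is the ``part of $B$ between $B_1$ and $B_2$'': because $B$ is a tree, $B_0$ is the union of components on the unique path in the dual tree from $B_1$ to $B_2$ (excluding $B_1,B_2$ themselves), so it is a rational chain, possibly empty (exactly when $B_1\cap B_2\neq\emptyset$). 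For part (i), the key observation is that $C$ is connected, so $C'$ is connected, so $\pi(C') = B$ is connected; since $C_1$ and $C_2$ are joined in $C$ only through the nodes $n_1,\dots,n_\delta$, every component of $C'$ lying over $B_0$ must be a contracted rational component, i.e. lie in some $T = \tau^{-1}(\tau(m))$ with $\tau(m)$ a node $n_j$. I would argue that the chain $B_0$ (or the single node $q_1=q_2$) must be ``attached'' to $B_1$ at a point of $Q_1$ and to $B_2$ at a point of $Q_2$ — this is where connectivity of $C'$ over the relevant node is used — and uniqueness of $q_1,q_2$ follows because there is a unique path in a tree.

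For part (ii), fix $m\in C_i$ with $\pi(m)=q_i$ and let $T=\tau^{-1}(\tau(m))$. Since $\tau$ contracts only smooth rational components meeting the rest in one or two points, $T$ is a connected rational chain (or a single point); the point $\tau(m)$ must be one of the nodes $n_j$ (because $m$ maps under $\pi_i$ to $q_i\in Q_i$ and, by part (i) and the connectivity argument, the only way $C_1$ and $C_2$ get linked over $B_0$ is through a contracted chain over a node). By condition (1), $\pi(T)$ is a connected subcurve of $B$ passing through $q_i$; since $T$ must link the branch of the node lying in $C_1$ to the branch lying in $C_2$ (that is what ``$T=\tau^{-1}(n_j)$'' forces, once we know $C_1,C_2\subset C'$ are the non-contracted components), $\pi(T)$ must connect a point of $B_1$ to a point of $B_2$, hence $\pi(T)\supseteq B_0$ and $\pi(T)\ni q_1,q_2$ when $q_1\neq q_2$. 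For the last assertion: fix a component $B'$ of $B$ and look at $T\cap\pi^{-1}(B')$. If this is $0$-dimensional we are done; otherwise it is a union of components of $T$. The map $\pi$ restricted to $T$ over $B'$ has some degree $d$, and I would bound $d$ from below by $e_{\pi_i}(m)$ using condition (3): at the node $\tau(m)=n_j$ of $C$, the two branches (one towards $C_1$, one towards $C_2$) must map with equal ramification index, and walking along the chain $T$ from $m$ to the $C_2$-side, the ``local degree'' of $\pi|_T$ over each component it covers is at least the ramification index at the node on the $C_i$-side, which is $e_{\pi_i}(m)$ — this is exactly the local model $xy=t$, $uv=t^e$, $u=x^e$, $v=y^e$ from the discussion after the definition of quasi admissible cover, applied iteratively along the chain.

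The main obstacle I anticipate is the bookkeeping in part (ii): making precise the claim that the degree of $\pi|_T$ over any single component $B'$ it covers is at least $e_{\pi_i}(m)$. The subtlety is that $T$ is a chain of $\Pbb^1$'s and $B'$ might be covered by several components of $T$, with the covering possibly ``turning around''; one has to use condition (3) at each node of the chain $T$ lying over a node of $B$ to propagate the lower bound on ramification from the $m$-end to wherever $B'$ sits, and also handle the case where $B'$ is an end component of $B_0$ adjacent to $B_1$ or $B_2$. I would organize this as an induction on the length of the chain $T$, peeling off one component at a time, with the inductive hypothesis being that the restriction of $\pi$ to the already-traversed part has degree $\geq e_{\pi_i}(m)$ over every component it covers, and using (3) plus equation (\ref{eq:degk}) at the linking node to pass to the next component. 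A secondary, milder obstacle is dispatching degenerate cases cleanly: $T$ a single point, $B_0$ empty, or $q_i$ coinciding with a node of $B_i$ — but these are handled by the same local analysis with trivial data.
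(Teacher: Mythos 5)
Your overall strategy coincides with the paper's: part (i) via the tree structure of the genus-$0$ curve $B$, and the last assertion of (ii) by propagating the bound $e_{\pi_i}(m)$ outward along the contracted chain, one component at a time, using condition (3) at each node together with (\ref{eq:degk}); that induction is essentially the paper's. The genuine gap is your claim in (ii) that $\tau(m)$ must be one of the nodes $n_j$. Part (ii) is asserted for \emph{every} $m\in C_i$ with $\pi(m)=q_i$, and in general the fiber $\pi_i^{-1}(q_i)$ contains points besides the branches $n_j^{(i)}$. For such an $m$, $\tau(m)$ is a smooth point of $C$; nevertheless $m$ is a node of $C'$ by condition (1) (its image $q_i$ is a node of $B$), hence a disconnecting node, and $T=\tau^{-1}(\tau(m))$ is a contracted rational tail attached to $C_i$ only at $m$ -- not a chain joining $C_1$ to $C_2$. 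Your justification that $\pi(T)$ contains $q_1$, $q_2$ and $B_0$ rests entirely on $T$ linking the two branches of a node, so it says nothing here; the paper argues separately in this case that $\pi(T)$ is the tail of $B$ associated to $q_i$ not containing $B_i$, which contains $B_0$ and hence both $q_1$ and $q_2$. The same misconception surfaces in your sketch of (i), where you assert that every component of $C'$ lying over $B_0$ sits in some $\tau^{-1}(n_j)$: rational tails attached to $C_i$ at points of $\pi_i^{-1}(q_i)$ also map over $B_0$ (see the components $L_j^i$ and the attached copies of $B_t$ in the construction of Theorem \ref{thm:gluing}).

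This missing case is not a dispensable technicality: it is precisely how the lemma is used in the proof of Proposition \ref{prop:converse}(iii) (and again in Lemma \ref{lem:degdelta} and Theorem \ref{thm:trig}), where conclusion (ii) is applied to the points $m_l^{(i)}\in\pi_i^{-1}(q_i)$ distinct from all the $n_j^{(i)}$, whose fibers $T_l^{(i)}$ are rational tails; the degree bounds over the components of $B_0$ for these tails are what yield the count $k\geq k_1+k_2-(e_1+\ldots+e_\delta)$. The repair is local: split (ii) into the cases $\tau(m)=n_j$ (your argument, which matches the paper's) and $\tau(m)$ a smooth point of $C$ (the disconnecting-node/tail argument above, as in the paper), and note also the degenerate case where $T$ is a single point, which forces $q_1=q_2$. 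Once $\pi(T)\supseteq B_0$ is secured in all cases, your inductive propagation of $e_{\pi_i}(m)$ goes through unchanged.
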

\begin{proof}
Since $C_i$ is smooth, it follows from \cite[Lemma 3.2]{paper1} that $C_i$ is a component of $C'$, for $i=1,2$. 

Let's show (i). If $Q_1\cap Q_2\neq \emptyset$, we let $q_1=q_2$ be a point in this intersection.
Note that since $B_1\neq B_2$ and $g(B)=0$, there is exactly one such point.
Now assume $Q_1\cap Q_2=\emptyset$. 
Let  $B_0$ be the intersection between the complements of $B_1$ and $B_2$ in $B$, that is
$$B_0=B_1^c\cap B_2^c.$$
Since $B$ is a connected nodal curve of genus 0, then $B$ is tree-like and since $B_1$ and $B_2$ are components of $B$, then $B_0$ is a rational chain in $B$. 
Since $B$ is tree-like and both $B_0$ and $B_i$ are connected, the intersection $B_0\cap B_i$ consists of a single point and we let $q_i$ be the point in $B_0\cap B_i$, for $i=1,2$. 
Note that $B_0$ is 
the intersection of 
the tail  of $B$ associated to $q_1$ not containing $B_1$ with 
the  tail  of $B$ associated to $q_2$ not containing $B_2$.


Now we show (ii). Let $m\in C_i$ such that $\pi(m)=q_i$ and let $T=\tau^{-1}(\tau(m))$. 
Note that $T$ is either a point or a rational chain.
If $T$ is a point, then $\tau(m)$ is a node of $C$, that is, $\tau(m)=n_j$ for some $j\in\{1,\ldots,\delta\}$. But then 
$n_j^{(1)},n_j^{(1)}\in T$ and hence $\pi(n_j^{(1)}),\pi(n_j^{(2)})\in \pi(T)$. Since $\pi(T)$ is a point and $\pi(n_j^{(i)})\in Q_i$ for $i=1,2$, we have $Q_1\cap Q_2\neq \emptyset$ and $\pi(n_j^{(i)})=q_i$ for $i=1,2$. In particular, $q_1,q_2\in\pi(T)$.

Now assume $T$ is a rational chain.
We have two cases to consider. 
If $m=n_j^{(i)}$ for some $j\in\{1,\ldots,\delta\}$ then $T$ contains $n_j^{(1)}$ and $n_j^{(2)}$, and hence $\pi(T)$ contains $q_1$ and $q_2$. In particular, $q_1\neq q_2$ and $\pi(T)$ contains  $B_0$. 
If $m\neq n_j^{(i)}$ for all $j=1,\ldots,\delta$, then $m$ is a disconnecting node,
since  $\pi(m)=q_i$ is a node of $B$ but $\tau(m)$ is not a node of $C$. 
Moreover, in this case  and $\pi(T)$ is the tail associated to $q_i$ not containing $B_i$, and thus containing $B_0$.

Furthermore, 
since $\pi$ is quasi admissible, then the restriction of $\pi$ to the component $T'$ of $T$ containing $m$ is finite of degree
$$\deg(\pi|_{T'})\geq e_{\pi_i}(m)$$ 
ramified to order $e_{\pi_i}(m)$ at $m$. 
Let  $B'=\pi(T')$ and note that $q_i\in B'$. 
Let $q\in B'$ be a node of $B$, distinct from $q_i$.
Since $\pi$ is quasi admissible, the points in $\pi^{-1}(q)$ are all nodes of $C'$. 
Let $B''$ be the component of $B$ such that $q$ is the node between $B'$ and $B''$.
Then $\pi^{-1}(B'')\cap T$ contains the nodes 
$\pi^{-1}(q)\cap T'$. Moreover, 
for each $m'\in\pi^{-1}(q)\cap T'$
the set $\pi^{-1}(B'')\cap T$ contains the component $T''_{m'}$ of $T$ such that $m'$ is the node between $T'$ and $T''_{m'}$. 
Hence
\begin{align*}
\deg(\pi|_{\pi^{-1}(B'')\cap T})
&\geq 
\sum_{m'\in\pi^{-1}(q)\cap T'} \deg(\pi|_{T''_{m'}})\\
&\geq 
\sum_{m'\in\pi^{-1}(q)\cap T'} e_{\pi|_{T'}}(m'')
=
\deg(\pi|_{T'})
\geq e_{\pi_i}(m).
\end{align*}
Proceeding in this manner, the result follows.
\end{proof}

\begin{proposition}\label{prop:converse}
Let $C$ be a nodal curve with smooth components $C_1$ and $C_2$ and nodes $n_1,\ldots,n_\delta$.  Let  $\pi\: C'\rightarrow B$ be a quasi admissible cover of degree $k$, where $C'$ is stably equivalent to $C$. 

For $i=1,2$,
let $k_i$ be the degree of $\pi_i=\pi|_{C_i}$, and set $B_i=\pi(C_i)$ and
$$Q_i=\{\pi_i(n_j^{(i)})\ |\ j=1,\ldots,\delta\},$$
where $n_j^{(i)}\in C_i$ is the branch of $n_j$ in $C_i$,
for each $j=1,\ldots, \delta$.

We have
\begin{enumerate}[(i)]
\item \label{item:b1=b2} if $B_1=B_2$ then 
$k\geq k_1+k_2;$

\item \label{item:qi}
If $B_1\neq B_2$
then
$k\geq k_i+|Q_i|-1,$
 for $i=1,2$; 

\item \label{item:main}  If $B_1\neq B_2$ and  $|Q_1|=|Q_2|=1$ then 
 $$k\geq k_1+k_2-(e_1+\ldots+e_\delta),$$ 
where and $e_j=\min\{e_{\pi_1}(n_j^{(1)}),e_{\pi_2}(n_j^{(2)})\}$, for $j=1,\ldots,\delta$.
\end{enumerate} 
\end{proposition}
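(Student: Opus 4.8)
The plan is to combine parts (i) and (ii), already proved, with the structural information from Lemma \ref{lem:Tj} to squeeze the degree $k$. Assume $B_1 \neq B_2$ and $|Q_1| = |Q_2| = 1$; write $Q_1 = \{q_1\}$ and $Q_2 = \{q_2\}$. By Lemma \ref{lem:Tj}(i) either $q_1 = q_2$, or the complement $B_0 = B_1^c \cap B_2^c$ is a rational chain joining $q_1$ to $q_2$ and missing $B_1$ and $B_2$. The idea is that the fiber of $\pi$ over a generic point $q_0$ of $B_0$ (or over $q_1 = q_2$ in the degenerate case) must contain enough points lying over the $C_i$'s, "most" of which are forced to be shared between the two sides via the contracted chains $T = \tau^{-1}(\tau(m))$ described in Lemma \ref{lem:Tj}(ii).

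First I would fix notation: for each node $n_j$ of $C$, both branches $n_j^{(1)}, n_j^{(2)}$ lie over the single points $q_1, q_2$ respectively, so the contracted chain $T_j = \tau^{-1}(n_j)$ is either the node $n_j$ itself or a rational chain containing both branches, and by Lemma \ref{lem:Tj}(ii) $\pi(T_j) \supseteq B_0 \cup \{q_1,q_2\}$. Next I would pick a component $B'$ of $B_0$ — or set $B' = B_1$ handled via the tail in the $q_1=q_2$ case — and estimate $k = \deg(\pi|_{\pi^{-1}(B')})$ by partitioning $\pi^{-1}(B')$ according to which connected contracted chain of $C'$ its points belong to. The key input is the last sentence of Lemma \ref{lem:Tj}(ii): for each $j$, the part of $T_j$ lying over $B'$ contributes degree at least $e_{\pi_i}(n_j^{(i)})$ for \emph{each} $i$ for which the relevant branch flows into $B'$; since $T_j$ connects the $C_1$-side to the $C_2$-side through $B_0$, it in fact contributes at least $\max\{e_{\pi_1}(n_j^{(1)}), e_{\pi_2}(n_j^{(2)})\}$ to $\deg(\pi|_{\pi^{-1}(B')})$. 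Combining this with part (ii) applied appropriately — which says that over $q_i$ the $C_i$-side already accounts for $k_i + |Q_i| - 1 = k_i$ of the degree — I would argue that $k \geq k_1 + k_2 - \sum_j \min\{e_{\pi_1}(n_j^{(1)}), e_{\pi_2}(n_j^{(2)})\}$, the subtracted term being exactly the overcount from chains $T_j$ that are simultaneously seen from both sides.

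Concretely, the cleanest route is an inclusion–exclusion over $\pi^{-1}(B')$: write $\deg(\pi|_{\pi^{-1}(B')}) = \deg(\pi|_{\pi^{-1}(B') \cap (\text{$C_1$-side})}) + \deg(\pi|_{\pi^{-1}(B') \cap (\text{$C_2$-side})}) - \deg(\pi|_{\pi^{-1}(B') \cap (\text{shared chains})})$, where the "$C_i$-side" means the union of components of $C'$ in the connected component of $\pi^{-1}(B_i^c)$ containing $C_i$'s image data, and the "shared chains" are the $T_j$. The first two terms are each $\geq k_i$ by a propagation argument from $q_i$ along $B_0$ (this is the mechanism in the proof of Lemma \ref{lem:Tj}(ii), repeated node by node along the chain), and the shared term is at most $\sum_j \max\{e_{\pi_1}(n_j^{(1)}), e_{\pi_2}(n_j^{(2)})\}$ minus $\sum_j e_j$... — no, more carefully, each shared chain $T_j$ over $B'$ has been double counted once, so it is subtracted once, contributing $-\deg(\pi|_{T_j \cap \pi^{-1}(B')})$, and here I would use that this degree is $\geq \max\{e_{\pi_1}(n_j^{(1)}),e_{\pi_2}(n_j^{(2)})\}$ only when bounding from below the pieces, while for the subtracted term I need the matching \emph{lower} bound on what the two sides share, namely that they share at least $\min\{e_{\pi_1}(n_j^{(1)}),e_{\pi_2}(n_j^{(2)})\}$ sheets. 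That last point — that the two one-sided degree contributions genuinely overlap in at least $\min\{e_{\pi_1}(n_j^{(1)}),e_{\pi_2}(n_j^{(2)})\}$ sheets along each $T_j$, rather than the overlap being spurious — is where I expect the real work to be, and it should follow by tracing the local picture $u = x^{e}$, $v = y^{e}$ at the nodes of $C'$ inside $T_j$ guaranteed by quasi admissibility (condition (3)), exactly as in the local analysis after the definition of quasi admissible cover. The degenerate case $q_1 = q_2$ is then handled by the same computation with $B' = B_1$ and the tail of $B$ at $q_1$ replacing $B_0$, or by noting it is strictly easier since there is no chain $B_0$ to propagate along. Finally, I would double-check the edge cases $\delta$ small and the possibility that some $T_j$ is just a node (then $e_{\pi_1}(n_j^{(1)}) = e_{\pi_2}(n_j^{(2)})$ by condition (3), so $\min = \max$ and the bookkeeping is consistent), completing the proof.
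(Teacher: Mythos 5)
Your skeleton is the same as the paper's---count the fibre of $\pi$ over a general point of a component $B'$ of $B_0$ and feed in Lemma \ref{lem:Tj}---and you correctly identify the key input that each chain $T_j=\tau^{-1}(n_j)$ has degree at least $\max\{e_{\pi_1}(n_j^{(1)}),e_{\pi_2}(n_j^{(2)})\}$ over $B'$. The genuine gap is in the step that turns these facts into the inequality. Your inclusion--exclusion $k=(\text{$C_1$-side})+(\text{$C_2$-side})-(\text{shared})$, with the lower bounds ``$C_i$-side $\geq k_i$'', can only yield $k\geq k_1+k_2-\sum_j e_j$ if the shared degree is bounded \emph{above} by $\sum_j e_j$; but the shared chains carry at least $\sum_j\max\{e_{\pi_1}(n_j^{(1)}),e_{\pi_2}(n_j^{(2)})\}$ sheets over $B'$, and possibly more, so no such upper bound exists. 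The patch you flag as ``the real work''---that the two sides share \emph{at least} $\min\{e_{\pi_1}(n_j^{(1)}),e_{\pi_2}(n_j^{(2)})\}$ sheets along each $T_j$---is the wrong direction of inequality and would not close the argument; and the appeal to part (ii) is empty here, since with $|Q_i|=1$ it only says $k\geq k_i$ and carries no information about which sheets over $B'$ are accounted for by which side.

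The correct assembly, which is what the paper does, needs no subtracted overlap at all. Write $\pi_i^{-1}(q_i)$ as the branches $n_j^{(i)}$ together with extra points $m_1^{(i)},\ldots,m_{r_i}^{(i)}$. Each $m_l^{(i)}$ maps to the node $q_i$ of $B$ without being a branch of a node of $C$, hence is a disconnecting node of $C'$ whose rational tail $T_l^{(i)}=\tau^{-1}(\tau(m_l^{(i)}))$ maps onto the tail of $B$ at $q_i$ not containing $B_i$, so it covers $B_0$ and, by Lemma \ref{lem:Tj}(ii) applied to $m_l^{(i)}$, has degree at least $e_{\pi_i}(m_l^{(i)})$ over each component of $B_0$; your sketch never applies the lemma to these points (``propagation from $q_i$'' is the right mechanism, but this is exactly where it has to be made precise). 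Then over a general point of $B'$ one gets $k\geq \sum_j\max\{e_{\pi_1}(n_j^{(1)}),e_{\pi_2}(n_j^{(2)})\}+\sum_l e_{\pi_1}(m_l^{(1)})+\sum_l e_{\pi_2}(m_l^{(2)})$, and since $\sum_l e_{\pi_i}(m_l^{(i)})=k_i-\sum_j e_{\pi_i}(n_j^{(i)})$ and $\max\{a,b\}=a+b-\min\{a,b\}$, this equals $k_1+k_2-(e_1+\cdots+e_\delta)$ with no double counting to correct. Note also that the paper's case split is not on $q_1=q_2$ versus $q_1\neq q_2$ but on whether some $\tau^{-1}(n_{j_0})$ is a point: in that case $q_1=q_2$, all branches are identified in $C'$, condition (3) forces $e_{\pi_1}(n_j^{(1)})=e_{\pi_2}(n_j^{(2)})=e_j$, and one counts instead over a general point of $B_1$, using the $k_1$ points of $C_1$ plus the tails $T_l^{(2)}$; your ``strictly easier'' remark points in this direction but this argument, not a repetition of the $B_0$ count, is what must be written.
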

\begin{proof}
%
 To see (\ref{item:b1=b2}) it is enough to note that,
for a general point $q\in B_i$, $\pi_i^{-1}(q)$ consists of $k_i$ points in $C_i$, for $i=1,2$. 
These two sets are disjoint and, since $B_1\neq B_2$, 
both are contained in $\pi^{-1}(q)$, 
showing that $k\geq k_1+k_2$.


Now assume $B_1\neq B_2$. 
Let $q_1\in Q_1$, $q_2\in Q_2$ and $B_0$ be as in Lemma \ref{lem:Tj}. 

Let's show (\ref{item:qi}) for $i=1$. The proof for $i=2$ is analogous.
If $|Q_1|=1$ there is nothing to show, as $\pi_1=\pi|_{C_1}$. Now assume $|Q_1|\neq 1$. 
For each $q_1'\in Q_1$ with $q_1'\neq q_1$
choose  $j(q_1')\in\{1,\ldots ,\delta\}$ such that 
$q_1'=\pi(n_{j(q_1')}^{(1)})$ and 
set $q_2'=\pi(n_{j(q_1')}^{(2)})$. 
Note that $q_1'\neq q_2'$ since otherwise $B_1$ and $B_2$ would meet at this point, contradicting the 
uniqueness of $q_1$ and $q_2$.
In particular, this implies that 
the branches $n_{j(q_1')}^{(1)}$ and $n_{j(q_1')}^{(2)}$ do not meet in $C'$ and thus
 $T_{j(q_1')}=\tau^{-1}(n_{j(q_1')})$ is a  rational chain in $C'$, where $\tau\: C'\rightarrow C$ is the contraction map. 
Now, $\pi(T_{j(q_1')})$ 
contains $q_1'$, and by Lemma \ref{lem:Tj},  it also contains $q_1$.
Since $\pi(T_{j(q_1')})$  is connected and
$q_1\neq q_1'$ then $\pi(T_{j(q_1')})$ must contain $B_1$.
Hence, for a general point $q\in B_1$ the preimage $\pi^{-1}(q)$ contains 
$k_1$ points in $C_1$ (corresponding to $\pi_1^{-1}(q)$),
 and at least one point of $T_{j(q_1')}$ for every $q_1'\in Q_1$ such that $q_1'\neq q_1$, showing that 
$$
k\geq k_{{1}}
+|Q_1|-1
.$$


Finally we show (\ref{item:main}). 
Since $|Q_i|=1$ then $Q_i=\{q_i\}$ 
and we have $\pi_i(n_j^{(i)})=q_i$ for all $i=1,2$ and $j=1,\ldots,\delta$. 
Then  
$$
\pi_i^{-1}(q_i)=e_{\pi_i}(n_1^{(i)})n^{(i)}_1+\ldots+e_{\pi_i}(n_\delta^{(i)})n^{(i)}_\delta
+e_{\pi_i}(m_1^{(i)})m_1^{(i)}+\ldots+e_{\pi_i}(m_{r_i}^{(i)})m_{r_i}^{(i)},
$$ 
for some $m_1^{(i)},\ldots,m_{r_i}^{(i)}\in C_i$.
Note that
\begin{equation}\label{eq:index}
e_{\pi_i}(m_1^{(i)})+\ldots+e_{\pi_i}(m_{r_i}^{(i)})=k_i-(e_{\pi_i}(n_1^{(i)})+\ldots+e_{\pi_i}(n_\delta^{(i)})).
\end{equation}

Since $m_l^{(i)}\neq n_j^{(i)}$ for every $j=1,\ldots,\delta$, and $\pi(m_l^{(i)})$ is a node of $B$ then $m_l^{(i)}$ must be a disconnecting node of $C'$ and 
thus $T_l^{(i)}=\tau^{-1}(\tau(m_l^{(i)}))$ is rational tail,
for each $i=1,2$ and  $l=1,\ldots,r_i$.
Note that $\pi$ maps  $T_l^{(i)}$ onto the tail of $B$ associated to $q_i$ 
not containing $B_i$,
and hence containing $B_{i'}$ for $i'\neq i$, $i'\in\{1,2\}$.
Moreover, by Lemma \ref{lem:Tj}, for each component 
$B'$ of $B$, either $T_l^{(i)}\cap \pi^{-1}(B')$ is finite or  
the restriction of $\pi$ to $T_l^{(i)}\cap \pi^{-1}(B')$ has degree at least $e_{\pi_i}(m_l^{(i)})$.
Since $\pi$ is quasi admissible, the restriction is 
ramified to order $e_{\pi_i}(m_l^{(i)})$ at  $m_l^{(i)}$
on the component of $T_l^{(i)}$ containing $m_l^{(i)}$.

Now, for each $j=1,\ldots, \delta$, the preimage $\tau^{-1}(n_j)$ consists of either a point or a rational chain. If $\tau^{-1}(n_{j_0})$ is a point for some $j_0=1,\ldots, \delta$, then $C_1$ and $C_2$ meet at $n_{j_0}^{(1)}=n_{j_0}^{(2)}=\tau^{-1}(n_{j_0})$ in $C'$. But this implies that $q_1=q_2$  and hence $n_j^{(1)}=n_j^{(2)}$ for all $j=1,\ldots,\delta$. 
Since $\pi$ is quasi admissible, this implies that
\begin{equation}\label{eq:e1=e2}
e_{\pi_1}(n_j^{(1)})=e_{\pi_2}(n_j^{(2)})=e_j
\end{equation}
 for all $j=1,\ldots,\delta$.
Hence for a general point $q\in B_1$, among the $k$  points of  $\pi^{-1}(q)$, 
there are 
$k_1$ points lying in $C_1$ and 
at least $e_{\pi_2}(m_l^{(2)})$ points lying in $T_l^{(2)}$, for  $l=1,\ldots,r_2$, and we have
$$k\geq k_1+ e_{\pi_2}(m_1^{(2)})+\ldots+e_{\pi_2}(m_{r_2}^{(2)}).$$
Thus, from (\ref{eq:index})  and (\ref{eq:e1=e2}) we get
$$k\geq k_1+k_2-(e_1+\ldots+e_\delta).$$

Now assume  $T_j=\tau^{-1}(n_j)$ is a rational chain for every $j=1,\ldots,\delta$. 
Then by Lemma \ref{lem:Tj}, $\pi(T_j)$ contains $B_0$, for $j=1,\ldots,\delta$. 
Note that the image $\pi(T_l^{(i)})$ also contains $B_0$, for  $l=1,\ldots,r_i$ and $i=1,2$.

By Lemma \ref{lem:Tj}, 
for each component 
$B'$ of $B_0$,
the restriction of $\pi$ to $T_j\cap \pi^{-1}(B')$ has degree at least 
$\max\{e_{\pi_1}(n_j^{(1)}), e_{\pi_2}(n_j^{(2)})\}$,
and
the restriction of $\pi$ to $T_l^{(i)}\cap \pi^{-1}(B')$ has degree at least 
$e_{\pi_i}(m_l^{(i)})$ ,
 for every $i=1,2$, $j=1,\ldots,\delta$ and $l=1,\ldots,r_i$.
Therefore, for a general point $q\in B_0$, among the $k$  points of  $\pi^{-1}(q)$, 
there are 
at least $\max\{e_{\pi_1}(n_j^{(1)}), e_{\pi_2}(n_j^{(2)})\}$ points lying in $T_j$, for $j=1,\ldots,\delta$, and 
at least $e_{\pi_i}(m_l^{(i)})$ points lying in $T_l^{(i)}$, for  $l=1,\ldots,r_i$ and $i=1,2$. Hence, from (\ref{eq:index}), we have
\begin{eqnarray*}
k&\geq &\sum_{j=1}^{\delta}\max\{e_{\pi_1}(n_j^{(1)}), e_{\pi_2}(n_j^{(2)})\}
+\sum_{l=1}^{r_1}e_{\pi_1}(m_l^{(1)})+\sum_{l=1}^{r_2}e_{\pi_2}(m_l^{(2)})\\
&=&\sum_{j=1}^{\delta}\max\{e_{\pi_1}(n_j^{(1)}), e_{\pi_2}(n_j^{(2)})\} 
+k_1-\sum_{j=1}^{\delta}e_{\pi_1}(n_j^{(1)}) +k_2-\sum_{j=1}^{\delta}e_{\pi_2}(n_j^{(1)})\\
&=&k_1+k_2-(e_1+\ldots+e_\delta).
\end{eqnarray*}
\end{proof}

In particular, for a two-component stable curve of compact type we have:

\begin{theorem}\label{thm:goncharcpt}
Let $C$ be a stable curve with smooth  components $C_1$ and $C_2$ and a single node $n$.
For $i=1,2$, let  $n^{(i)}\in C_i$ be the branch of $n$ in $C_i$ and 
let $e_i$ be the maximum ramification index of  a degree-$\gon(C_i)$ map $C_i\rightarrow \mathbb{P}^1$ at $n^{(i)}$.
Then $\gon(C)=\gon(C_1)+\gon(C_2)-e$, where
$e=\min\{e_1,e_2\}$.
\end{theorem}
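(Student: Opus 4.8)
The plan is to prove the theorem by combining the upper bound already established with a matching lower bound coming from Proposition \ref{prop:converse}. For the upper bound, let $\pi_i\colon C_i\to\mathbb P^1$ be a degree-$\gon(C_i)$ map achieving ramification index $e_i$ at $n^{(i)}$, for $i=1,2$. Since $C$ has a single node, the hypothesis $\pi_i(n_1^{(i)})=\ldots=\pi_i(n_\delta^{(i)})$ in Corollary \ref{cor:gonchar} is vacuous, so both sets $\Pi_i$ are non-empty and contain $\pi_i$. Applying Corollary \ref{cor:gonchar} (equivalently Proposition \ref{prop:gluing2} with $\delta=1$) gives $\gon(C)\le \gon(C_1)+\gon(C_2)-\min\{e_1,e_2\}=\gon(C_1)+\gon(C_2)-e$.

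For the lower bound I would invoke Theorem \ref{HarrMum}: since $\gon(C)=k$, there is a $k$-sheeted admissible cover $\pi\colon C'\to B$ with $C'$ stably equivalent to $C$. In particular $\pi$ is quasi admissible, so Proposition \ref{prop:converse} applies with $\delta=1$. Set $\pi_i=\pi|_{C_i}$, $k_i=\deg\pi_i$, $B_i=\pi(C_i)$, and $Q_i=\{\pi_i(n^{(i)})\}$, which is automatically a singleton. There are two cases. If $B_1=B_2$, then part (\ref{item:b1=b2}) gives $k\ge k_1+k_2$; since $k_i\ge\gon(C_i)$ and $e=\min\{e_1,e_2\}\ge 1$, this already yields $k\ge\gon(C_1)+\gon(C_2)\ge\gon(C_1)+\gon(C_2)-e$. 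If $B_1\ne B_2$, then since $|Q_1|=|Q_2|=1$, part (\ref{item:main}) gives $k\ge k_1+k_2-e_1'$, where $e_1'=\min\{e_{\pi_1}(n^{(1)}),e_{\pi_2}(n^{(2)})\}$ is the single term $e_1+\ldots+e_\delta$ with $\delta=1$.

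The remaining point, which is the crux of the argument, is to show $k_1+k_2-e_1'\ge\gon(C_1)+\gon(C_2)-e$. Here I would argue that $k_i\ge\gon(C_i)$ trivially, and that $e_{\pi_i}(n^{(i)})\le k_i$ with, more to the point, the quantity $k_i-e_{\pi_i}(n^{(i)})$ bounding how much each side can ``save.'' The clean way: write $k_i - e_{\pi_i}(n^{(i)}) \ge \gon(C_i) - e_i$, because $e_i$ is by definition the \emph{maximum} ramification index at $n^{(i)}$ over \emph{all} degree-$\gon(C_i)$ maps, so if $k_i=\gon(C_i)$ then $e_{\pi_i}(n^{(i)})\le e_i$ and the inequality holds; and if $k_i>\gon(C_i)$ one checks $k_i-e_{\pi_i}(n^{(i)})\ge k_i-k_i+1 = 1 \ge \gon(C_i)-e_i$ may fail, so a little care is needed — instead one uses that $\pi_i$ restricted to $C_i$ composed with a suitable projection still gives a $\gon(C_i)$-gonal structure is false in general, so the right move is: since $e_1'=\min\{e_{\pi_1}(n^{(1)}),e_{\pi_2}(n^{(2)})\}$, WLOG $e_1'=e_{\pi_1}(n^{(1)})\le e_1\le$ well — the honest approach is to split on which index realizes the minimum and bound the corresponding $k_i-e_{\pi_i}(n^{(i)})\ge\gon(C_i)-e_i\ge\gon(C_i)-e$ while using $k_{i'}\ge\gon(C_{i'})$ for the other. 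Assembling,
\[
k\ \ge\ k_1+k_2-e_1'\ \ge\ \gon(C_1)+\gon(C_2)-e,
\]
which together with the upper bound forces equality. I expect the main obstacle to be precisely this bookkeeping in the $B_1\ne B_2$ case: verifying that the term $e_{\pi_i}(n^{(i)})$ subtracted in Proposition \ref{prop:converse} is controlled by the \emph{maximal} ramification index $e_i$ of a \emph{minimal-degree} map, which requires handling the possibility $k_i>\gon(C_i)$ and arguing that the ramification index cannot then exceed $e_i$ by more than $k_i-\gon(C_i)$.
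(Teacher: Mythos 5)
Your overall strategy is the same as the paper's: the upper bound via Corollary \ref{cor:gonchar} (equivalently Proposition \ref{prop:gluing2} with $\delta=1$), and the lower bound by taking an admissible cover of degree $\gon(C)$ from Theorem \ref{HarrMum}, restricting to the components, excluding $B_1=B_2$ by Proposition \ref{prop:converse}(i) together with $k_i\ge\gon(C_i)$, and then invoking Proposition \ref{prop:converse}(iii) since $|Q_1|=|Q_2|=1$. Up to the inequality $\gon(C)\ge k_1+k_2-\min\{e_{\pi_1}(n^{(1)}),e_{\pi_2}(n^{(2)})\}$ your write-up matches the paper and is correct.

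The problem is the last comparison, $k_1+k_2-\min\{e_{\pi_1}(n^{(1)}),e_{\pi_2}(n^{(2)})\}\ge\gon(C_1)+\gon(C_2)-e$, which you yourself call the crux and then do not establish; as written this is a genuine gap, and the chain you propose, $k_i-e_{\pi_i}(n^{(i)})\ge\gon(C_i)-e_i\ge\gon(C_i)-e$, fails on both links. The second inequality asserts $e_i\le e=\min\{e_1,e_2\}$, which holds only if the component realizing $\min\{e_{\pi_1}(n^{(1)}),e_{\pi_2}(n^{(2)})\}$ also realizes $\min\{e_1,e_2\}$, and there is no reason the two minima are attained at the same index. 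The first inequality is exactly the unproved claim you flag for the case $k_i>\gon(C_i)$, and as a statement about an arbitrary map it is false: on a genus-$2$ component with $n^{(i)}=p$ a general non-Weierstrass point, $|3p|$ is a base-point-free pencil, so there is a degree-$3$ map with $e_{\pi_i}(p)=3$, giving $k_i-e_{\pi_i}(p)=0$, while $\gon(C_i)-e_i=2-1=1$ (the unique $g^1_2$ is unramified at $p$). So the componentwise bookkeeping you sketch cannot be repaired as stated; any complete argument has to say something about which restrictions $\pi|_{C_i}$ can actually occur in a minimal-degree admissible cover (for instance, reduce to $k_i=\gon(C_i)$, or control the ramification of higher-degree restrictions), and nothing in your sketch does this. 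For what it is worth, the paper's own proof passes over this step in a single asserted inequality, so you have correctly isolated the delicate point of the argument; but your proposal, with its trailing ``a little care is needed'' and ``is false in general,'' does not close it and therefore does not yet constitute a proof of the lower bound.
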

\begin{proof}
By Corollary \ref{cor:gonchar}, $\gon(C)\leq\gon(C_1)+\gon(C_2)-e$.

Converselly, let $\pi\: C'\rightarrow B$ be an admissible cover of degree $\deg(\pi)=\gon(C)$, for some $C'$ stably equivalent to $C$. 
Let $\pi_i=\pi|_{C_i}$, $B_i=\pi(C_i)$, $k_i=\deg(\pi_i)$ and $Q_i$ be as in Lemma  \ref{lem:Tj}, for $i=1,2$.
First note that, by Proposition \ref{prop:converse} (\ref{item:b1=b2}) we have $B_1\neq B_2$ since otherwise $\deg(\pi)$ would be at least $\gon(C_1)+\gon(C_2)$. 
Now, since $C$ has only one node, $|Q_1|=|Q_2|=1$ and by Proposition \ref{prop:converse} (\ref{item:main}), 
$$\gon(C)=\deg(\pi)\geq k_1+k_2-e(\pi_1,\pi_2)\geq \gon(C_1)+\gon(C_2)-e$$ 
and the result follows.
\end{proof}

\section{Hyperelliptic and trigonal two-component curves}\label{sec:hyptrig}

A stable curve is \emph{hyperelliptic} if $\gon(C)=2$ and is \emph{trigonal} if $\gon(C)=3$. 
As a consequence of the resuts of the previous sections, we can characterize hyperelliptic and trigonal  stable curves having two smooth non rational components.  First we need some lemmas.

\begin{lemma}\label{lem:goncomp}
Let $C$ be a stable curve and let $C_i$ be a smooth component of $C$.
Then $\gon(C)\geq \gon(C_i)$.
\end{lemma}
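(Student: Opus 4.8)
The plan is to deduce this directly from the Harris--Mumford characterization (Theorem~\ref{HarrMum}) by restricting an admissible cover of $C$ to the component $C_i$. Set $k=\gon(C)$, so that $C$ is $k$-gonal; then there is a $k$-sheeted admissible cover $\pi\colon C'\rightarrow B$ with $C'$ stably equivalent to $C$. Let $\tau\colon C'\rightarrow C$ be the contraction map. The first step is to check that $C_i$ survives in $C'$: the only components of $C'$ that $\tau$ contracts are \emph{smooth rational} ones meeting the rest of $C'$ in one or two points, and since $C$ is stable it contains no such component; hence the component of $C'$ whose image under $\tau$ is $C_i$ is carried isomorphically onto $C_i$, and I will simply identify it with $C_i$ and regard $C_i$ as a component of $C'$.

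Next I would look at the restriction $\pi_i:=\pi|_{C_i}\colon C_i\rightarrow B$. Since $\pi$ is finite it has finite fibers, so $\pi_i$ is non-constant and its image $B_i:=\pi(C_i)$ is a component of $B$. As $B$ is a connected nodal curve of genus $0$, it is a tree of smooth rational curves; in particular $B_i\cong\Pbb^1$, and $\pi_i$ is a finite map $C_i\rightarrow\Pbb^1$, say of degree $k_i$.

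The third step is the bound $k_i\leq k$. Choose $q\in B_i$ general, so that $q$ is a smooth point of $B$, it is not a branch point of $\pi$, and $\pi_i$ is unramified over it. By condition~(1), $\pi^{-1}(q)$ consists of $k$ distinct smooth points of $C'$, and those lying on $C_i$ are exactly the points of $\pi_i^{-1}(q)$; therefore $k_i=|\pi_i^{-1}(q)|\leq|\pi^{-1}(q)|=k$. Thus $C_i$ admits a map of degree at most $k$ to $\Pbb^1$, which gives $\gon(C_i)\leq k=\gon(C)$, as desired.

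I do not expect a genuine obstacle here. The only point needing a little care is the first step, namely that stable equivalence can neither delete the component $C_i$ nor fuse it with another component; this is immediate from stability, since a smooth rational component of a stable curve meets its complement in at least three points and so is never among the components contracted by $\tau$. Everything else is a routine fiber count, so the argument is short.
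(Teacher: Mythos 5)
Your proof is correct and follows essentially the same route as the paper: both arguments take a degree-$\gon(C)$ admissible cover $\pi\colon C'\rightarrow B$ with $C'$ stably equivalent to $C$, observe that $C_i$ survives as a component of $C'$, and restrict $\pi$ to it. The only difference is cosmetic: you conclude by a fiber count and the classical definition of gonality for the smooth curve $C_i$, whereas the paper treats the rational case separately and then uses Lemma~\ref{lem:ratldeg2} and Theorem~\ref{thm:quasitoadmiss} to view the restriction as a (quasi) admissible cover; either way one gets $\gon(C_i)\leq\deg(\pi|_{C_i})\leq\gon(C)$.
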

\begin{proof}
If $C_i$ is rational, there is nothing to prove, since in this case $\gon(C_i)=1$.

Now assume $C_i$ non rational. If $\pi\: C'\rightarrow B$ is an admissible cover of degree $\gon(C)$ with $C'$ stably equivalent to $C$, then $C'$ contains $C_i$ as a component, and the restriction of $\pi$ to $C_i$ is quasi admissible, by Lemma \ref{lem:ratldeg2}. Thus, by Theorem \ref{thm:quasitoadmiss}, $C_i$ is $gon(C)$-gonal and  the result follow.
\end{proof}

\begin{lemma}\label{lem:degdelta}
Let $C$ be a stable curve  with two smooth non rational components $C_1$ and $C_2$ and nodes $n_1,\ldots,n_\delta$. 
Let $\pi\: C'\rightarrow B$ be a quasi admissible cover, where $C'$ is stably equivalent to $C$. 

Let $\pi_i=\pi|_{C_i}$, $q_i\in Q_i$ and $n_j^{(i)}\in C_i$  be as in Lemma \ref{lem:Tj}, 
for $i=1,2$ and $j=1,\ldots,\delta$. If $\deg(\pi_i)=\deg(\pi)$ for some $i=1,2$ then 
$$\deg(\pi)\geq\delta 
\quad\text{ and }\quad
e_{\pi_i}(n_j^{(i)})\geq e_{\pi_{i'}}(n_j^{(i')}),$$ 
for all $j\in\{1,\ldots,\delta\}$ such that 
$\pi_1(n_j^{(1)})=q_1$
and $\pi_{2}(n_j^{(2)})=q_{2}$, 
where $\{i,i'\}=\{1,2\}$.
\end{lemma}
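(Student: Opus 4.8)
The plan is to assume without loss of generality that $\deg(\pi_1)=\deg(\pi)=k$; the case $i=2$ is symmetric. First I would observe that $B_1\neq B_2$, since otherwise Proposition~\ref{prop:converse}(\ref{item:b1=b2}) would give $k\geq k_1+k_2>k_1=k$. Thus Lemma~\ref{lem:Tj} applies and furnishes $q_1\in Q_1$, $q_2\in Q_2$ and, when $q_1\neq q_2$, the rational chain $B_0$. Applying Proposition~\ref{prop:converse}(\ref{item:qi}) with $i=1$ yields $k\geq k_1+|Q_1|-1=k+|Q_1|-1$, so $|Q_1|=1$ and $\pi_1(n_1^{(1)})=\cdots=\pi_1(n_\delta^{(1)})=q_1$. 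Since $C_1$ is smooth, the branches $n_1^{(1)},\ldots,n_\delta^{(1)}$ are $\delta$ distinct points of $C_1$ lying in the single fibre $\pi_1^{-1}(q_1)$, so by (\ref{eq:degk}) applied to $\pi_1\: C_1\rightarrow B_1$ at the smooth point $q_1\in B_1$ we get $\deg(\pi)=\deg(\pi_1)\geq|\pi_1^{-1}(q_1)|\geq\delta$, which is the first assertion. I also record for later use that, since $\deg(\pi_1)=\deg(\pi)$, no component of $C'$ other than $C_1$ maps onto $B_1$ (over a general point of $B_1$ the fibre of $\pi$ is already exhausted by $\pi_1^{-1}$).

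For the ramification inequality, fix $j$ with $\pi_1(n_j^{(1)})=q_1$ (automatic) and $\pi_2(n_j^{(2)})=q_2$, and set $T_j=\tau^{-1}(n_j)$. Suppose first that $q_1=q_2$. If $T_j$ were a rational chain, its first and last links (containing $n_j^{(1)}$ and $n_j^{(2)}$ respectively) would map onto components of $B$ which, together with $B_1=\pi(C_1)$ and $B_2=\pi(C_2)$, exhaust the branches of the node $q_1$ of $B$; since a node has exactly two branches and $B_1\neq B_2$, one of those links would map onto $B_1$, contradicting the observation above. Hence $T_j$ is a point, $n_j^{(1)}=n_j^{(2)}$ in $C'$, and condition~(3) of a quasi admissible cover forces $e_{\pi_1}(n_j^{(1)})=e_{\pi_2}(n_j^{(2)})$.

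Now suppose $q_1\neq q_2$; here I would carry out a fibre count over $B_0$ modeled on the proof of Proposition~\ref{prop:converse}(\ref{item:main}). Write $\pi_1^{-1}(q_1)=\sum_{j'} e_{\pi_1}(n_{j'}^{(1)})\,n_{j'}^{(1)}+\sum_l e_{\pi_1}(m_l^{(1)})\,m_l^{(1)}$, so that $\sum_{j'}e_{\pi_1}(n_{j'}^{(1)})+\sum_l e_{\pi_1}(m_l^{(1)})=k$. Because $q_1\neq q_2$ forces $Q_1\cap Q_2=\emptyset$ and $|Q_1|=1$, the branches $n_{j'}^{(1)}$ and $n_{j'}^{(2)}$ have distinct images, so each $T_{j'}=\tau^{-1}(n_{j'})$ is a rational chain; likewise each $m_l^{(i)}$ is a disconnecting node of $C'$ with $T_l^{(i)}=\tau^{-1}(\tau(m_l^{(i)}))$ a rational tail, and by Lemma~\ref{lem:Tj}(ii) the images of all these pairwise disjoint subcurves contain $B_0$. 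Fixing a component $B'$ of $B_0$ and a general point $q\in B'$, Lemma~\ref{lem:Tj}(ii) (applied with $m=n_{j'}^{(1)}$, and additionally with $m=n_{j'}^{(2)}$ when $\pi_2(n_{j'}^{(2)})=q_2$) bounds below the number of points of $\pi^{-1}(q)$ in each of these subcurves, giving
\begin{align*}
k\;\geq\;&\sum_{j'\,:\,\pi_2(n_{j'}^{(2)})=q_2}\max\{e_{\pi_1}(n_{j'}^{(1)}),\,e_{\pi_2}(n_{j'}^{(2)})\}
\;+\sum_{j'\,:\,\pi_2(n_{j'}^{(2)})\neq q_2}e_{\pi_1}(n_{j'}^{(1)})\\
&+\sum_l e_{\pi_1}(m_l^{(1)})+\sum_l e_{\pi_2}(m_l^{(2)}).
\end{align*}
Substituting $\sum_{j'}e_{\pi_1}(n_{j'}^{(1)})+\sum_l e_{\pi_1}(m_l^{(1)})=k$ and cancelling $k$ leaves a sum of non-negative terms equal to $0$; in particular $\max\{e_{\pi_1}(n_j^{(1)}),e_{\pi_2}(n_j^{(2)})\}=e_{\pi_1}(n_j^{(1)})$, i.e.\ $e_{\pi_1}(n_j^{(1)})\geq e_{\pi_2}(n_j^{(2)})$.

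I expect the main obstacle to be the bookkeeping in this last count: one must verify that the subcurves $T_{j'}$ and $T_l^{(i)}$ are pairwise disjoint and each meets the general fibre $\pi^{-1}(q)$ (they do, since each of their images contains $B'$), that Lemma~\ref{lem:Tj}(ii) may be invoked with $m=n_{j'}^{(2)}$ only for those $j'$ whose second branch lies over $q_2$, and — most importantly — that $\pi_1^{-1}(q_1)$ really accounts for all $k$ sheets, which is precisely where the hypotheses $\deg(\pi_1)=\deg(\pi)$ and $|Q_1|=1$ enter.
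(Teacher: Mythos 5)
Your proposal is correct, and for the first assertion ($\deg(\pi)\geq\delta$) it coincides with the paper's argument: $B_1\neq B_2$ by Proposition \ref{prop:converse}(\ref{item:b1=b2}), $|Q_1|=1$ by Proposition \ref{prop:converse}(\ref{item:qi}), and then the fibre of $\pi_1$ over $q_1$ contains the $\delta$ distinct branches. For the ramification inequality, however, you take a genuinely different route. The paper argues by contradiction on a single node: assuming $e_{\pi_1}(n_j^{(1)})<e_{\pi_2}(n_j^{(2)})$, it applies Lemma \ref{lem:Tj}(ii) only to the component $B'$ of $B_0$ containing $q_1$, concludes that $\pi|_{T_j\cap\pi^{-1}(B')}$ has degree strictly larger than its ramification order $e_{\pi_1}(n_j^{(1)})$ at $n_j^{(1)}$, hence produces a second point of $T_j$ over $q_1$; this point is a disconnecting node whose tail maps onto the tail of $B$ containing $B_1$, adding an extra sheet over a general point of $B_1$ and contradicting $\deg(\pi_1)=\deg(\pi)$. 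You instead redo the global fibre count of Proposition \ref{prop:converse}(\ref{item:main}) over a general point of a component of $B_0$, listing all the pairwise disjoint chains $T_{j'}$ and tails $T_l^{(i)}$, and then use the identity $\sum_{j'}e_{\pi_1}(n_{j'}^{(1)})+\sum_l e_{\pi_1}(m_l^{(1)})=k=\deg(\pi)$ to force every slack term $\max\{e_{\pi_1}(n_{j'}^{(1)}),e_{\pi_2}(n_{j'}^{(2)})\}-e_{\pi_1}(n_{j'}^{(1)})$ to vanish, which yields the inequality for all relevant $j$ simultaneously. Both arguments ultimately rest on Lemma \ref{lem:Tj}(ii) and on the observation that $\deg(\pi_1)=\deg(\pi)$ leaves no room for other components over $B_1$; yours is slightly more bookkeeping-heavy but gives the conclusion for all $j$ at once, and as a bonus your explicit treatment of the case $q_1=q_2$ (ruling out a nontrivial chain $T_j$ because its last link would have to dominate $B_1$ by condition (3), and then invoking condition (3) at the node $n_j^{(1)}=n_j^{(2)}$) fills in a step the paper passes over quickly with the assertion that $n_j^{(1)}\neq n_j^{(2)}$ forces $q_1\neq q_2$.
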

\begin{proof}
Without loss of generality, assume $\deg(\pi_1)=\deg(\pi)$. 
Let $B_1=\pi(C_1)$ and $B_2=\pi(C_2)$.

By Proposition \ref{prop:converse} (\ref{item:b1=b2}) we have $B_1\neq B_2$ and thus, by  Proposition \ref{prop:converse} (\ref{item:qi}) we have $|Q_1|=1$, so $Q_1=\{q_1\}$. 
But then 
$\pi_1(n_1^{(1)})=\ldots=\pi_1(n_\delta^{(1)})=q_1$ which implies that $$\delta\leq \deg(\pi_1)=\deg(\pi),$$
showing the first assertion.

To show the second assertion, let $B_0$ be as 
in Lemma \ref{lem:Tj},
that is, 
either $q_1=q_2$ or  
$B_0$ is  the intersection between  the tail  of $B$ associated to $q_1$ not containing $B_1$ with  the  tail  of $B$ associated to $q_2$ not containing $B_2$.
Let $j\in\{1,\ldots,\delta\}$ such that $\pi_1(n_j^{(1)})=q_1$
and $\pi_{2}(n_j^{(2)})=q_{2}$.

We proceed by contradiction, so assume 
$e_{\pi_1}(n_j^{(1)})< e_{\pi_{2}}(n_j^{(2)})$. Since $\pi$ is quasi  admissible, this implies that $n_j^{(1)}\neq n_j^{(2)}$ and thus
$q_1\neq q_2$. In particular,  $T_j=\tau^{-1}(n_j)$ is a rational chain in $C'$ 
and $\pi(T_j)$ contains $B_0$,
where $\tau\: C'\rightarrow C$ is the contraction map.
Now,  by Lemma \ref{lem:Tj},
for each component $B'$ of $B_0$, either $T_j\cap \pi^{-1}(B')$ is finite or  
the restriction of $\pi$ to $T_j\cap \pi^{-1}(B')$ 
has degree at least  $\max\{e_{\pi_1}(n_j^{(1)}),\, e_{\pi_{2}}(n_j^{(2)})\}$, 

In particular if $B'$ is the component of $B_0$ containing $q_1$,
since $e_{\pi_1}(n_j^{(1)})< e_{\pi_{2}}(n_j^{(2)})$, the restriction of $\pi$ to 
$T_j\cap \pi^{-1}(B')$ has degree
$$\deg(\pi|_{T_j\cap \pi^{-1}(B')})\geq e_{\pi_2}(n_j^{(2)})
>e_{\pi_1}(n_j^{(1)})$$ 
and is ramified to order $e_{\pi_1}(n_j^{(1)})$ at $n_j^{(1)}$.
Hence there exists at least one point $m \in T_j$ with $m\neq n_j^{(1)}$ such that $\pi(m)=q_1$. 
Since $\pi$ is quasi admissible and  $q_1$ is a node of $B$, then $m$ must be a node of $C'$. 
Furthermore, since $C'$ is stably equivalent to $C$ then $m$ must be a disconnecting node of $C'$. 
Since $\pi$ maps $m$ to $q_1$ then, for one of the tails $T'$ associated to $m$, the image $\pi(T')$ contains $B_1$.
Then,
for a general point $q\in B_1$, the preimage $\pi^{-1}(q)$  contains $\deg(\pi_1)$ points of $C_1$ and at least one point of $T'$, contradicting the fact that $\deg(\pi_1)=\deg(\pi)$. 
\end{proof}

To simplify the statements, if $\pi\:C\rightarrow \Pbb^1$ is a rational map from a smooth curve $C$, we 
say that a set of points $p_1,\ldots,p_n$ on $C$ is \emph{conjugated under $\pi$} if 
$\pi(p_1)=\ldots=\pi(p_n)$.

\begin{theorem}\label{thm:hyp}
Let $C$ be a stable curve  with two smooth non rational components. 
Then $C$ is hyperelliptic if and only if both components are hyperelliptic and one of the following holds:
\begin{enumerate}[(i)]
\item $C$ has one node and the branches of the node in each component have ramification index 2 under a degree-2 map;

\item $C$ has two nodes and the branches of the nodes in each component are
conjugated points of ramification index 1 under a degree-2 map.
\end{enumerate}
\end{theorem}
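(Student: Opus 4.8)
The plan is to prove the two directions separately, using the machinery already assembled. For the "if" direction, suppose both components $C_1,C_2$ are hyperelliptic, so $\gon(C_i)=2$. In case (i), the hypothesis says each branch $n^{(i)}$ has ramification index $2$ under the degree-$2$ map $\pi_i\colon C_i\to\Pbb^1$; since there is a single node, the set $\Pi_i(\gon(C_i))$ from Theorem A is non-empty (it contains $\pi_i$), and $e(\pi_1,\pi_2)=\min\{2,2\}=2$. Corollary \ref{cor:gonchar} (equivalently Theorem A(ii)) then gives $\gon(C)\le 2+2-2=2$, and $\gon(C)\ge 2$ by Lemma \ref{lem:goncomp} since the components are non rational; hence $\gon(C)=2$. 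In case (ii), with two nodes, the hypothesis says the branches $n_1^{(i)},n_2^{(i)}$ are conjugated under $\pi_i$ (so $\pi_i\in\Pi_i(2)$) and each has ramification index $1$; thus $e(\pi_1,\pi_2)=\min\{1,1\}+\min\{1,1\}=2$, and again Corollary \ref{cor:gonchar} yields $\gon(C)\le 2+2-2=2$, forcing $\gon(C)=2$.

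For the "only if" direction, assume $\gon(C)=2$ and take an admissible cover $\pi\colon C'\to B$ of degree $2$ with $C'$ stably equivalent to $C$. By Lemma \ref{lem:goncomp}, each $C_i$ is hyperelliptic. Write $\pi_i=\pi|_{C_i}$, $B_i=\pi(C_i)$, $k_i=\deg\pi_i$, $Q_i$ as in Lemma \ref{lem:Tj}. Since $\deg\pi=2$ and each $C_i$ is non rational, $k_i\ge 2$, hence $k_i=k=2$ and $\deg\pi_1=\deg\pi_2=\deg\pi$. Proposition \ref{prop:converse}(i) then forces $B_1\ne B_2$ (else $\deg\pi\ge 4$), so Lemma \ref{lem:degdelta} applies: it gives $\delta\le\deg\pi=2$, so $C$ has one or two nodes, and it gives the symmetric inequality $e_{\pi_i}(n_j^{(i)})\ge e_{\pi_{i'}}(n_j^{(i')})$ for all $j$ with $\pi_1(n_j^{(1)})=q_1$, $\pi_2(n_j^{(2)})=q_2$; applied with both choices of $\{i,i'\}$ this forces $e_{\pi_1}(n_j^{(1)})=e_{\pi_2}(n_j^{(2)})$ for every such $j$. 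Also $\delta\ge 1$ since $C$ is reducible. It remains to split into $\delta=1$ and $\delta=2$ and pin down the ramification indices and the conjugacy condition.

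When $\delta=1$: by Proposition \ref{prop:converse}(ii), $|Q_1|=|Q_2|=1$ automatically, so $\pi_i\in\Pi_i(2)$ vacuously. Proposition \ref{prop:converse}(iii) gives $2=\deg\pi\ge k_1+k_2-e_1 = 4-e_1$ where $e_1=\min\{e_{\pi_1}(n^{(1)}),e_{\pi_2}(n^{(2)})\}$; hence $e_1\ge 2$, and since ramification indices are at most $k_i=2$, we get $e_{\pi_1}(n^{(1)})=e_{\pi_2}(n^{(2)})=2$, which is case (i). When $\delta=2$: we must have $\pi_i(n_1^{(i)})=\pi_i(n_2^{(i)})=q_i$ for $i=1,2$ — this is where I expect the main work. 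Since $\deg\pi_i=2$ and the two branches $n_1^{(i)},n_2^{(i)}$ are distinct points of $C_i$, their images under $\pi_i$ are either equal (then both have ramification index $1$) or the images account for all of a general fibre's worth of points; one uses Lemma \ref{lem:degdelta}'s first conclusion $\deg\pi\ge\delta=2$ together with the fibre-counting argument of Lemma \ref{lem:Tj}(ii) — if $\pi_1(n_1^{(1)})\ne\pi_1(n_2^{(1)})$, then tracing the rational chains $\tau^{-1}(n_j)$ over $B$ as in Proposition \ref{prop:converse} produces extra points in a general fibre of $\pi$ and contradicts $\deg\pi=2$. So both branches in each component are conjugated under the degree-$2$ map, hence have ramification index $1$ (a degree-$2$ map sending two distinct points to the same point is unramified at both), which is exactly case (ii). The main obstacle is this last case analysis for $\delta=2$: showing that the branches in each component must actually be conjugated under $\pi_i$ rather than landing on two different points of $B_i$, and I would handle it by the fibre-degree bookkeeping over the chain $B_0$ already used in the proof of Proposition \ref{prop:converse}.
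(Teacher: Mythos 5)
Your proposal is correct and follows essentially the same route as the paper: Lemma \ref{lem:goncomp} plus Corollary \ref{cor:gonchar} for the ``if'' direction, and for the ``only if'' direction a degree-$2$ admissible cover restricted to the components, Lemma \ref{lem:degdelta} to get $\delta\leq 2$, and Proposition \ref{prop:converse} to pin down the two cases. The only remark: the $\delta=2$ conjugacy step you single out as the main obstacle needs no fresh fibre-counting, since with $B_1\neq B_2$ and $k=k_i=2$ Proposition \ref{prop:converse}(ii) already gives $2\geq 2+|Q_i|-1$, hence $|Q_1|=|Q_2|=1$, which is exactly the statement that the branches are conjugated under $\pi_i$; ramification index $1$ then follows as you note (the paper instead quotes Proposition \ref{prop:converse}(iii)).
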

\begin{proof}
Let $C_1$ and $C_2$ be the components of $C$.  If follows from Lemma \ref{lem:goncomp} and Corollary \ref{cor:gonchar} that, if $C_1$ and $C_2$ are hyperelliptic and either (i) or (ii) hold, then $C$ is also hyperelliptic.

Now assume $C$ is hyperelliptic. 
By Lemma \ref{lem:goncomp}, $C_1$ and $C_2$ are both hyperelliptic, since they are  both non rational by hypothesis.
Let $\pi\: C'\rightarrow B$ be an admissible cover of degree $\deg(\pi)=2$, for some $C'$ stably equivalent to $C$. As in Lemma \ref{lem:Tj}, let $\pi_i=\pi|_{C_i}$, for $i=1,2$. 
Since $C_i$ is non rational and $\deg(\pi_i)\leq \deg(\pi)=2$, we must have $\deg(\pi_i)=2$. 
By Lemma \ref{lem:degdelta} this implies that $C$ has at most two nodes.

If $C$ has one node then, in the notation of Theorem \ref{thm:goncharcpt}, we must have $e=2$  and thus the  branch of the node in each component  must have ramification index 2.
Finally assume $C$ has two nodes. By Proposition \ref{prop:converse} (\ref{item:b1=b2}) we see that $\pi(C_1)\neq \pi(C_2)$. Hence, by Proposition \ref{prop:converse} (\ref{item:qi}), the branches of the nodes must be conjugated points under $\pi_1$ and $\pi_2$
and, by Proposition \ref{prop:converse} (\ref{item:main}), they must have ramification index 1.
\end{proof}

\begin{lemma}\label{lem:gluegon}
Let $C$ be a stable curve  with two smooth non rational components $C_1$ and $C_2$. 
Let $n$ be a node of $C$ and let $\tilde C$ be the normalization of $C$ at $n$. 
Let $\tilde\pi\:\tilde C'\rightarrow \tilde B$ be an admissible cover,  where $\tilde C'$ is stably equivalent to $\tilde C$. 

Let $q_1,q_2\in\tilde B$ be as in Lemma \ref{lem:Tj} and
 let $n^{(i)}\in C_i$ be the branch of $n$ at $C_i$, for $i=1,2$.

\begin{enumerate}[(i)]
\item Assume that $\tilde C$ is connected, $\tilde\pi(C_1)\neq\tilde\pi(C_2)$, and
$$\tilde \pi(n^{(i)})=q_i
\quad\text{ and  }\quad
\tilde \pi(n^{({i'})})\neq q_{i'}$$
for $\{i,i'\}=\{1,2\}$.
Then there exists an admissible cover 
$\pi\:C'\rightarrow B$ of degree $\deg(\pi)=\deg(\tilde\pi)$ with $C'$ stably equivalent to $C$, containing $\tilde C'$ as a subcurve and such that $\pi|_{\tilde C'}=\tilde \pi$. 

\item Assume that $\tilde C$ is connected, $\tilde\pi(C_1)\neq\tilde\pi(C_2)$, and
$$\tilde \pi(n^{(1)})\neq q_1
\quad\text{ and  }\quad
\tilde \pi(n^{({2})})\neq  q_{2}.$$
Then there exists an admissible cover 
$\pi\:C'\rightarrow B$ of degree $\deg(\pi)=\deg(\tilde\pi)+1$ with $C'$ stably equivalent to $C$, containing $\tilde C'$ as a subcurve and such that $\pi|_{\tilde C'}=\tilde \pi$.

\end{enumerate}
\end{lemma}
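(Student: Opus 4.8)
The plan is to run the ``gluing two points of the same admissible cover'' construction of \cite[Thm.~3.4]{paper1}, in the variant valid for (quasi) admissible covers discussed right after Theorem~\ref{thm:gluing}, on the two branches $n^{(1)}\in C_1$ and $n^{(2)}\in C_2$, viewed as points of $\tilde C'$. Since $C$ is obtained from $\tilde C$ precisely by identifying $n^{(1)}$ with $n^{(2)}$, what we must produce is a cover $\pi\:C'\to B$ with $\tilde C'\subseteq C'$, $\tilde B\subseteq B$, $\pi|_{\tilde C'}=\tilde\pi$, obtained by adjoining to $\tilde C'$ a chain $T$ of smooth rational curves, one end glued to $n^{(1)}$ and the other to $n^{(2)}$, so that contracting $T$ recovers the node $n$ (making $C'$ stably equivalent to $C$), together with a compatible chain of rational curves in $B$ onto which $T$ maps. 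Writing $p_i=\tilde\pi(n^{(i)})$ and $e_i=e_{\tilde\pi}(n^{(i)})$, I would define $\pi$ on the new components by the usual local models $u=x^e$, $v=y^e$ of a quasi admissible cover at each added node, so that conditions (1) and (3) hold by construction; the assertion that $\pi$ may then be taken to be a genuine admissible cover is obtained, as in the proof of Theorem~\ref{thm:gluing}, by invoking Theorem~\ref{thm:quasitoadmiss}. The real content is (a) arranging the degree to be $\deg\tilde\pi$ in case (i) and $\deg\tilde\pi+1$ in case (ii), and (b) checking condition (2) for the new curve $B$, which by Lemma~\ref{lem:condition2} may be done componentwise: the components inherited from $\tilde B$ keep at least the branch points they had for $\tilde\pi$, and here the hypothesis $\tilde\pi(C_1)\neq\tilde\pi(C_2)$, via Lemma~\ref{lem:Tj}, guarantees that $B_1$, $B_2$ and the chain joining $q_1$ to $q_2$ behave well.

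The split into the two cases comes from how the bridge $T$ can be routed through the tree $\tilde B$ without creating a loop (which would violate $g(B)=0$). In case (i), say $i=1$: as $\tilde\pi(n^{(1)})=q_1$ and $q_1$ is a node of $\tilde B$ (it separates $B_1$ from the rest, and $B_1\neq B_2$), condition (1) forces $n^{(1)}$ to be a node of $\tilde C'$; since $n^{(1)}$ is not a branch of any node of $\tilde C$, it must sit at the foot of an exceptional rational chain $\tilde R\subset\tilde C'$ which $\tilde\pi$ maps into the tail of $\tilde B$ lying on the $B_2$-side of $q_1$. I would re-purpose $\tilde R$ into the bridge $T$: extend it along the unique path in $\tilde B$ from $q_1$ to $p_2$, then through a single fresh rational component attached to $\tilde B$ at $p_2$ (making $p_2$ a node of $B$), and glue its far end to $n^{(2)}\in C_2$, inserting a short ramified chain if $e_1\neq e_2$. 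Because the sheets needed for $T$ over the components of $\tilde B$ are exactly the ones $\tilde\pi$ already provides (through $\tilde R$), and the only new component of $B$ is covered by $\deg\tilde\pi$ sheets, the degree is unchanged, so $\deg\pi=\deg\tilde\pi$.

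In case (ii) neither $p_1$ nor $p_2$ equals the separating point $q_i$, so there is no exceptional chain at $n^{(1)}$ or $n^{(2)}$ whose image lies along the path joining $B_1$ to $B_2$; over every neighbourhood of that path all $\deg\tilde\pi$ sheets of $\tilde\pi$ are already used, leaving no room to route $T$ inside $\tilde B$. I would therefore adjoin a fresh component $L\cong\Pbb^1$ to $\tilde B$, make both $p_1$ and $p_2$ nodes of $B$, and let $\pi$ cover $L$ with $\deg\tilde\pi+1$ sheets, with $T$ mapping onto $L$ and joining $n^{(1)}$ to $n^{(2)}$ through it; Riemann--Hurwitz applied to $\pi^{-1}(L)\to L$ forces at least one branch point on $L$, so $L$ carries at least three special points and condition (2) holds there. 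Hence $\deg\pi=\deg\tilde\pi+1$. In both cases one finally checks that $C'$ is nodal (the chain $T$ is attached so that no point acquires multiplicity $\ge3$) and that contracting $T$ gives a curve stably equivalent to $C$, both immediate once $T$ is taken minimal.

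The step I expect to be the main obstacle is exactly the condition-(2) bookkeeping for the modified target $B$ --- equivalently, showing that $B$ with its branch points is again a stable pointed rational curve --- since this is what simultaneously pins down the two degrees and forces the dichotomy: the bridge can be built sheet-for-sheet only when one branch already sits over the separating node $q_i$ (case (i), where an exceptional chain is available to recycle), and otherwise one is obliged to pay for an entire extra sheet (case (ii)). A secondary technical point is verifying that the ramification indices can be matched along $T$ --- handled, as usual, by inserting auxiliary ramified $\Pbb^1$'s --- without spoiling condition (2) on those auxiliary components.
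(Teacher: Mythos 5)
Your case (i) is essentially the paper's argument: since $\tilde\pi(n^{(1)})=q_1$ is a node of $\tilde B$, the point $n^{(1)}$ is a node of $\tilde C'$, so the contraction fiber $T_1=\tilde\tau^{-1}(\tilde\tau(n^{(1)}))$ is an exceptional rational chain whose image (by Lemma \ref{lem:Tj}) is the whole tail of $\tilde B$ on the far side of $q_1$, hence already contains $\tilde\pi(T_2)$; one then glues a point of $T_1$ to a smooth point of $T_2$ lying over the \emph{same} point of $\tilde B$ and invokes the degree-preserving part of \cite[Thm.\ 3.4]{paper1}. One caveat: your phrase ``extend it along the unique path in $\tilde B$ from $q_1$ to $p_2$'' is either unnecessary or impermissible — the map on $\tilde R=T_1$ is prescribed ($\pi|_{\tilde C'}=\tilde\pi$), and no extension is needed precisely because Lemma \ref{lem:Tj} says its image already covers that path; that observation, not any new routing, is what makes the degree come out unchanged.

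Case (ii), however, contains a genuine error as written. You attach a single new component $L\cong\Pbb^1$ to $\tilde B$ at \emph{both} $p_1$ and $p_2$. Since $\tilde B$ is connected, $B=\tilde B\cup L$ then has a cycle, so $g(B)\geq 1$, and $\pi$ is not a (quasi) admissible cover: the definition requires $g(B)=0$, and no bookkeeping of branch points on $L$ can repair that. A second, related oversight: you put $\deg\tilde\pi+1$ sheets only over $L$, but a finite cover of the connected curve $B$ has constant degree over every component, so the components of $\tilde B$ must also acquire an extra sheet (extra copies of those components in $C'$), which your construction does not provide. The paper avoids both issues by not building the bridge by hand: it simply chooses smooth points ${n'}^{(1)}\in T_1$ and ${n'}^{(2)}\in T_2$ with $\tilde\pi({n'}^{(1)})\neq\tilde\pi({n'}^{(2)})$ and applies \cite[Thm.\ 3.4(a)]{paper1}, whose construction keeps the base a tree (new base components are attached at one point each) and adds a full extra sheet everywhere, yielding $\deg(\pi)=\deg(\tilde\pi)+1$. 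If you want a self-contained construction rather than the citation, you must route the bridge in $C'$ over the existing chain of components of $\tilde B$ joining $p_1$ to $p_2$, attach new base components at $p_1$ and at $p_2$ separately, and add the compensating extra sheet over the rest of $\tilde B$; as described, your $B$ is not an admissible base at all.
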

\begin{proof}
For $i=1,2$, let 
$T_i=\tilde\tau^{-1}(\tilde\tau(n^{(i)}))$ where $\tilde\tau\:\tilde C'\rightarrow \tilde C$ is the contraction map, for $i=1,2$. 
Since $\tilde \pi$ is admissible and $\tilde\tau(n^{(i)})$ is not a node of $\tilde C$, 
then either $T_i$ is a rational tail
or $T_i=n^{(i)}$ is a smooth point of $\tilde C$, for $i=1,2$.

To show (ii),
let ${n'}^{(1)}\in T_1$ and  ${n'}^{(2)}\in T_2$ be smooth points
such that 
$\tilde \pi({n'}^{(1)})\neq 
\tilde \pi({n'}^{({2})})$.
Then
the result follows directly from \cite[Theorem 3.4 (a)]{paper1}.

Now we show (i). Without loss of generality, we assume $i=1$ and $i'=2$.
Since $\tilde \pi$ is admissible and $\tilde \pi(n^{(1)})=q_1$ is a node of $\tilde B$ 
then $n^{(1)}$ is a node of $\tilde C'$ and hence $T_1$ is a rational tail.
Moreover, by Lemma \ref{lem:Tj}, $\pi(T_1)$ contains $q_1$ and $q_2$.
Since $\tilde\pi(T_1)$ is the tail associated to $\tilde \pi(n^{(1)})=q_1$
not containing 
$\tilde\pi(C_{1})$, then in particular
$\tilde\pi(T_1)$ contains  $\tilde\pi(T_{2})$.
Let ${n'}^{({2})}$ be any smooth point of $T_{2}$, 
and let ${n'}^{(1)}$ be a point of $T_1$ such that 
$\tilde \pi({n'}^{(1)})=\tilde \pi({n'}^{({2})})$. 
The result thus follows from
\cite[Theorem 3.4(b)]{paper1}.
\end{proof}

\begin{theorem}\label{thm:trig}
Let $C$ be a stable curve  with two smooth non rational components. 
Then $C$ is trigonal if and only if the components are either hyperelliptic or trigonal and one of the following cases hold:
\begin{enumerate}[(i)]
\item $C$ has one node, and the branches of the node have 
	\begin{enumerate}[(a)]
	\item  ramification index 1 under a degree-2 map for one component, and 
		ramification index 1 or 2 under a degree-2 map for the other component;

	\item ramification index 2 under a degree-2 map for one  component, and 
		ramification index 2 or 3 under a degree-3 map for the other component;


	\item ramification index 3 under a degree-3 map, for both components.
	\end{enumerate}

\item $C$ has two nodes, and the branches of the nodes are
	\begin{enumerate}[(a)]

	\item  conjugated points of ramification index 1 under a degree-2 map for one component, and
		conjugated points of ramification index 1 or 2 under a degree-3 map for the other component;


	\item  conjugated points of ramification indexes 1 and 2 under a degree-3 map for both components,  such that the branches corresponding to the same node have the same ramification index;

	\item non conjugated points under a degree-2 map for one component, at least one of which has ramification index 2, and
		conjugated points of ramification indexes 1 and 2 under a degree-3 map for the other  component,
		such that the branch having ramification index 2 on the second component correspond to the same node as  a branch having ramification index 2 on the first component.

	\item conjugated points of ramification index 1 under a degree-2 map for one component, and
		non conjugated points of ramification indexes 1 or 2  under a degree-2 map for the other component;

	\item non conjugated points, at least one of which has ramification index 2 under a degree-2 map, for both components, 
such that a branch having ramification index 2 on the first component correspond to the same node as  a branch having ramification index 2 on the second component. 

	\end{enumerate}

\item $C$ has three nodes, and the branches of the nodes are 
	\begin{enumerate}[(a)]
	\item  conjugated points of ramification index 1 under a degree-3 map, for both components;

	\item two conjugated points of ramification index 1 and one non conjugated point of ramification index 1 or 2 under a degree-2 map, for one component, and
conjugated points of ramification index 1 under a degree-3 map for the other component;

	\item  two conjugated points of ramification index 1 and one non conjugated point of ramification index 1 or 2 under a degree-2 map, for both components.
	\end{enumerate}
\end{enumerate}
\end{theorem}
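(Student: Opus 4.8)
The plan is to follow the pattern of the proof of Theorem \ref{thm:hyp}: prove the two implications separately, the ``if'' direction by exhibiting degree-$3$ admissible covers and ruling out gonality $2$, and the ``only if'' direction by analysing an arbitrary degree-$3$ admissible cover with the machinery of Proposition \ref{prop:converse} and Lemmas \ref{lem:Tj} and \ref{lem:degdelta}. The only real difference from the hyperelliptic case is the length of the case analysis.

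For the ``if'' direction, suppose the components $C_1,C_2$ are hyperelliptic or trigonal and one of the configurations (i)--(iii) holds. When the branches of all nodes are conjugated on both components --- cases (i)(a)--(c), (ii)(a)--(b), (iii)(a) --- I would simply verify that the degree produced by Proposition \ref{prop:gluing2}, namely $k_1+k_2-\sum_{j=1}^{\delta}\min\{e_{\pi_1}(n_j^{(1)}),e_{\pi_2}(n_j^{(2)})\}$, equals $3$; for a single node one can alternatively read this off Theorem \ref{thm:goncharcpt}. In the remaining cases (ii)(c)--(e) and (iii)(b)--(c), where the branches are not all conjugated on at least one component, I would normalize $C$ at the offending node, build an admissible cover of the resulting curve --- using Proposition \ref{prop:gluing2} on the now-conjugated sub-configuration when one is available, and otherwise Theorem \ref{thm:hyp} or Proposition \ref{prop:gluing2} on the partial normalization --- and glue the node back with Lemma \ref{lem:gluegon}; the hypotheses on the branches are designed precisely so that the correct alternative of Lemma \ref{lem:gluegon} applies and the degree comes out to $3$. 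Finally, to upgrade $\gon(C)\le 3$ to $\gon(C)=3$: if some component is trigonal this is immediate from Lemma \ref{lem:goncomp}, and if both are hyperelliptic one checks that none of the listed configurations is of the shape allowed by Theorem \ref{thm:hyp}, so $C$ is not hyperelliptic.

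For the ``only if'' direction, suppose $C$ is trigonal with components $C_1,C_2$. By Lemma \ref{lem:goncomp} each component is hyperelliptic or trigonal. If $C$ has a single node, Theorem \ref{thm:goncharcpt} gives $3=\gon(C_1)+\gon(C_2)-\min\{e_1,e_2\}$, and solving this for $(\gon(C_1),\gon(C_2))\in\{2,3\}^2$ produces exactly cases (i)(a)--(c). If $C$ has more nodes, fix a degree-$3$ admissible cover $\pi\colon C'\to B$ with $C'$ stably equivalent to $C$, and set $\pi_i=\pi|_{C_i}$, $k_i=\deg\pi_i$, $B_i=\pi(C_i)$ and $Q_i$ as in Lemma \ref{lem:Tj}; then $2\le k_i\le 3$ because $C_i$ is non-rational. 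By Proposition \ref{prop:converse}(i) we cannot have $B_1=B_2$ (else $3=\deg\pi\ge k_1+k_2\ge 4$), so $B_1\ne B_2$, and Proposition \ref{prop:converse}(ii) then gives $|Q_i|\le 4-k_i$. I would next show $\delta\le 3$: if some $k_i=3$ this is Lemma \ref{lem:degdelta}, and if $k_1=k_2=2$ it follows by counting $\pi^{-1}(q)$ for $q$ a general point of $B_1$, resp.\ of the rational chain $B_0$ of Lemma \ref{lem:Tj}, exactly as in the proof of Proposition \ref{prop:converse}. It then remains to run through the finitely many possibilities for $(k_1,k_2,\delta)$ and for the distribution of the branches $n_j^{(i)}$ among the fibres of $\pi_i$: for each one, Proposition \ref{prop:converse}(iii) (when $|Q_1|=|Q_2|=1$) or the tail estimates of Lemmas \ref{lem:Tj} and \ref{lem:degdelta} (otherwise), together with $\sum_j e_{\pi_i}(n_j^{(i)})\le k_i$ and $\deg\pi=3$, pin down the ramification indices, and one checks that the consistent configurations are precisely (ii)(a)--(e) and (iii)(a)--(c) while the rest are contradictory.

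The main obstacle I expect is the necessity direction, and within it two points: establishing $\delta\le 3$ cleanly, and carrying out the long but elementary enumeration without error. The most delicate items are the ``matching'' hypotheses in (ii)(b), (ii)(c) and (ii)(e) --- that certain branches lying over the \emph{same} node of $C$ have equal, or equal-to-$2$, ramification indices on the two components --- since these come from applying the ramification-domination statement of Lemma \ref{lem:degdelta} along the rational chain sitting over that shared node, and one has to keep careful track of which branch the estimate actually controls.
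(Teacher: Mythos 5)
Your plan is correct and follows essentially the same route as the paper: sufficiency by gluing (Proposition \ref{prop:gluing2}/Corollary \ref{cor:gonchar} in the all-conjugated cases, normalization at the non-conjugated node plus Lemma \ref{lem:gluegon} in the others, with Theorem \ref{thm:hyp} ruling out gonality $2$), and necessity by fixing a degree-$3$ admissible cover and enumerating over $(k_1,k_2)\in\{2,3\}^2$, $|Q_i|$ and $\delta$ using Proposition \ref{prop:converse}, Lemma \ref{lem:Tj} and Lemma \ref{lem:degdelta}, including the fibre-counting argument that bounds $\delta$ when $k_1=k_2=2$ and the matching conditions in (ii)(b),(c),(e) coming from Lemma \ref{lem:degdelta}. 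The only (harmless) deviation is that you dispatch the one-node case of the necessity direction via Theorem \ref{thm:goncharcpt}, whereas the paper absorbs it into the same case analysis.
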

\begin{proof}
Let $C_1$ and $C_2$ be the components of $C$.  

Assume first that (i), (ii) or (iii) hold. Then by Lemma \ref{lem:goncomp}, $\gon(C)\geq 2$. But, by Theorem \ref{thm:hyp}, $\gon(C)\neq 2$ and thus we have $\gon(C)\geq 3$. 
If the branches of all the nodes are conjugated 
that is, if (i), (ii)(a), (ii)(b) or (iii)(a) hold,
then by Corollary \ref{cor:gonchar} 
$\gon(C)\leq 3$, showing that $C$ is trigonal

Let's now examine the remaining cases.

Let $C$ be as in (ii)(c). Let $n_1$ and $n_2$ be the nodes of $C$ and let $n_j^{(i)}\in C_i$ be the branch of $n_j$ in $C_i$, for $i=1,2$ and $j=1,2$.
Without loss of generality, assume that $n_1^{(1)}$ and $n_2^{(1)}$ are non conjugated and $n_1^{(2)}$ and $n_2^{(2)}$ are conjugated, and that
$$e_{\pi_1}(n_1^{(1)})=
e_{\pi_2}(n_1^{(2)})=2,
\quad\text{ and} \quad
e_{\pi_2}(n_2^{(2)})=1.$$
If $\tilde C$ is the normalization of $C$ at $n_2$ then $\tilde C$ is as in (i)(b).
Then there is an admissible cover 
$\tilde\pi\:\tilde C'\rightarrow \tilde B$ of degree $\deg(\tilde\pi)=3$ with $\tilde C'$ stably equivalent to $\tilde C$.
Note that that $q_1=\tilde\pi(n_1^{(1)})$ and $q_2=\tilde\pi(n_1^{(2)})$ are as in Lemma \ref{lem:Tj}, and by hypothesis
$\tilde\pi(n_2^{(1)})\neq q_1$ and $\tilde\pi(n_2^{(2)})=q_2$. 
Then by Lemma \ref{lem:gluegon} (i), $\gon(C)\leq 3$, showing that $C$ is trigonal.

Let $C$ be as in (ii)(d). Let $n_1$ and $n_2$ be the nodes of $C$ and let $n_j^{(i)}\in C_i$ be the branch of $n_j$ in $C_i$, for $i=1,2$ and $j=1,2$.
Without loss of generality, assume that $n_1^{(1)}$ and $n_2^{(1)}$ are conjugated and $n_1^{(2)}$ and $n_2^{(2)}$ are non conjugated.
If $\tilde C$ is the normalization of $C$ at $n_2$ then $\tilde C$ is as in (i)(a).
Then there is an admissible cover 
$\tilde\pi\:\tilde C'\rightarrow \tilde B$ of degree $\deg(\tilde\pi)=3$ with $\tilde C'$ stably equivalent to $\tilde C$.
Note that that $q_1=\tilde\pi(n_1^{(1)})$ and $q_2=\tilde\pi(n_1^{(2)})$ are as in Lemma \ref{lem:Tj}, and by hypothesis
$\tilde\pi(n_2^{(1)})=q_1$ and $\tilde\pi(n_2^{(2)})\neq q_2$. 
Then by Lemma \ref{lem:gluegon} (i), $\gon(C)\leq 3$, showing that $C$ is trigonal.

Let $C$ be as in (ii)(e). Let $n_1$ and $n_2$ be the nodes of $C$ and let $n_j^{(i)}\in C_i$ be the branch of $n_j$ in $C_i$, for $i=1,2$ and $j=1,2$.
Without loss of generality, assume that 
$e_{\pi_1}(n_1^{(1)})=
e_{\pi_2}(n_1^{(2)})=2$.
If $\tilde C$ is the normalization of $C$ at $n_2$ then $\tilde C$ is as in Theorem \ref{thm:hyp} (i).
Then there is an admissible cover 
$\tilde\pi\:\tilde C'\rightarrow \tilde B$ of degree $\deg(\tilde\pi)=2$ with $\tilde C'$ stably equivalent to $\tilde C$.
Note that that $q_1=\tilde\pi(n_1^{(1)})$ and $q_2=\tilde\pi(n_1^{(2)})$ are as in Lemma \ref{lem:Tj}, and by hypothesis
$\tilde\pi(n_2^{(1)})\neq q_1$ and $\tilde\pi(n_2^{(2)})\neq q_2$. 
Then by Lemma \ref{lem:gluegon} (ii), $\gon(C)\leq 3$, showing that $C$ is trigonal.

Let $C$ be as in (iii)(b). Let $n_1$, $n_2$ and $n_3$ be the nodes of $C$ and let $n_j^{(i)}\in C_i$ be the branch of $n_j$ in $C_i$, for $i=1,2$ and $j=1,2,3$.
Without loss of generality, assume that 
$n_1^{(1)}, n_2^{(1)}$ are conjugated and $n_3^{(1)}$ is non conjugated, and that
$n_1^{(2)},n_2^{(2)},n_3^{(2)}$ are conjugated.
If $\tilde C$ is the normalization of $C$ at $n_3$ then $\tilde C$ is as in (ii)(a).
Then there is an admissible cover 
$\tilde\pi\:\tilde C'\rightarrow \tilde B$ of degree $\deg(\tilde\pi)=3$ with $\tilde C'$ stably equivalent to $\tilde C$.
Note that that $q_1=\tilde\pi(n_1^{(1)})$ and $q_2=\tilde\pi(n_1^{(2)})$ are as in Lemma \ref{lem:Tj}, and by hypothesis
$\tilde\pi(n_3^{(1)})\neq q_1$ and $\tilde\pi(n_3^{(2)})=q_2$. 
Then by Lemma \ref{lem:gluegon} (i), $\gon(C)\leq 3$, showing that $C$ is trigonal.

Finally, 
let $C$ be as in (iii)(c). Let $n_1$, $n_2$ and $n_3$ be the nodes of $C$ and let $n_j^{(i)}\in C_i$ be the branch of $n_j$ in $C_i$, for $i=1,2$ and $j=1,2,3$.
Without loss of generality, assume that 
$n_1^{(i)}, n_2^{(i)}$ are conjugated and $n_3^{(i)}$ is non conjugated, for $i=1,2$.
If $\tilde C$ is the normalization of $C$ at $n_3$ then $\tilde C$ is as in Theorem \ref{thm:hyp} (ii).
Then there is an admissible cover 
$\tilde\pi\:\tilde C'\rightarrow \tilde B$ of degree $\deg(\tilde\pi)=2$ with $\tilde C'$ stably equivalent to $\tilde C$.
Note that that $q_1=\tilde\pi(n_1^{(1)})$ and $q_2=\tilde\pi(n_1^{(2)})$ are as in Lemma \ref{lem:Tj}, and by hypothesis
$\tilde\pi(n_3^{(1)})\neq q_1$ and $\tilde\pi(n_3^{(2)})\neq q_2$. 
Then by Lemma \ref{lem:gluegon} (ii), $\gon(C)\leq 3$, showing that $C$ is trigonal.

\medskip

Now assume $C$ is trigonal. By Lemma \ref{lem:goncomp}, $C_1$ and $C_2$ are either hyperelliptic or trigonal, since they are  both non rational by hypothesis.
Let $\pi\: C'\rightarrow B$ be an admissible cover of degree $\deg(\pi)=3$, for some $C'$ stably equivalent to $C$. 
As in Lemma \ref{lem:Tj}, let $\pi_i=\pi|_{C_i}$ and $B_i=\pi(C_i)$, for $i=1,2$. 
Since $C_i$ is non rational and $\deg(\pi_i)\leq \deg(\pi)=3$, we must have $\deg(\pi_i)=2$ or $\deg(\pi_i)=3$, for $i=1,2$. 
In particular, by Proposition \ref{prop:converse} (\ref{item:b1=b2}), we must have $B_1\neq B_2$.


Let $n_1,\ldots,n_\delta$ be the nodes of $C$.
For $j=1,\ldots,\delta$,  let 
$n_j^{(i)}\in C_i$  be the branch of $n_j$ in $C_i$ and
$$e_j=\min\{ e_{\pi_1}(n_j^{(1)}),\,e_{\pi_2}(n_j^{(2)})\},
$$ 
and let
$Q_i$ and $q_i$ be as in Lemma \ref{lem:Tj}, 
for $i=1,2$.
We will consider four cases, depending on $\deg(\pi_i)$.

CASE 1. $\deg(\pi_1)=\deg(\pi_2)=3$: 
In this case, both components are either hyperelliptic or trigonal and
by Lemma \ref{lem:degdelta} we have $\delta\leq 3$. Moreover, by Proposition \ref{prop:converse}  (\ref{item:qi}), the branches of the nodes must be conjugated by $\pi_1$ and $\pi_2$.
Now, $e_j\leq e_{\pi_i}(n_j^{(i)})$ and thus 
$$\sum_{j=1}^\delta e_j\leq \sum_{j=1}^\delta e_{\pi_i}(n_j^{(i)}) =\deg(\pi_i)=3.$$
Therefore, Proposition \ref{prop:converse} (\ref{item:main}) gives
$\sum_{j=1}^\delta e_j=3$
and we get (i)(c), (ii)(b) and (iii)(a).

\smallskip

CASE 2. $\deg(\pi_1)=2$ and $\deg(\pi_2)=3$: 
In this case, the first component is hyperelliptic and the second is either hyperelliptic or trigonal. 
Again $\delta\leq 3$, by Lemma \ref{lem:degdelta}.
Moreover, by Proposition \ref{prop:converse} (\ref{item:qi}), 
we have $|Q_1|\leq 2$ and $|Q_2|=1$, so
the branches of the nodes must be conjugated by $\pi_2$, although not necessarely by $\pi_1$. 

If $|Q_1|=1$ then the branches of the nodes must also be conjugated by $\pi_1$, 
and in particular we have $\delta\leq 2$.
Now, since $e_j\leq e_{\pi_1}(n_j^{(1)})$ we have
$$\sum_{j=1}^\delta e_j\leq \sum_{j=1}^\delta e_{\pi_1}(n_j^{(1)}) =\deg(\pi_1)=2$$
and thus  Proposition \ref{prop:converse} (\ref{item:main}) gives
$\sum_{j=1}^\delta e_j=2$.
If $\delta=1$, we get (i)(b) and (i)(c). If $\delta=2$ then $e_1=e_2=1$ and we get (ii)(a).


Now assume $|Q_1|=2$, say $Q_1=\{q_1,\,q_1'\}$. In particular $\delta\geq2$ and thus we have $\delta=2$ or $\delta=3$. 
Note that 
\begin{equation}\label{eq:aux1}
\sum_{n_j^{(1)}\in\pi^{-1}_1(q_1)} e_{\pi_1}(n_j^{(1)})=2.
\end{equation}
Indeed, if that is not the case then there exists $m\in C'$ such that $\pi_1(m)=q_1$ and $m\neq n_j^{(1)}$ for every $j=1,\ldots,\delta$. Then, as in the proof of Lemma \ref{lem:Tj}, 
 $m$ must be a disconnecting node of $C'$ and
 we let $T$ be the tail of $C'$ associated to $m$ not containing $C_1$. 
Then $\pi(T)$ is the tail of $B$ associated to $q_1$ not containing $B_1$,
since  $\pi$ is quasi admissible and  maps $m$ and $C_1$ to $q_1$ and $B_1$, respectively. 
In particular $\pi(T)$ contains $B_2$ 
and, for a general point $q\in B_2$, the preimage $\pi^{-1}(q)$  contains $\deg(\pi_2)=3$ points of $C_2$ and at least one point of $T$, contradicting the fact that $\deg(\pi)=3$. 

If $\delta=2$ then there is only one branch in the preimage of $\pi_1$, say 
$\pi^{-1}_1(q_1)=\{n_1^{(1)}\}$, 
and by (\ref{eq:aux1}) and Lemma \ref{lem:degdelta} we must have 
$e_{\pi_1}(n_1^{(1)})=e_{\pi_2}(n_1^{(2)})=2.$
In particular this implies that $e_{\pi_2}(n_2^{(2)})=1$, and $e_{\pi_1}(n_2^{(1)})$ can be 1 or 2, and we get (ii)(c).

If $\delta=3$, since $|Q_2|=1$ then 
$\pi_2(n_1^{(2)})
=\pi_2(n_2^{(2)})
=\pi_2(n_3^{(2)})=q_2$ and in particular $e_{\pi_2}(n_j^{(2)})=1$ for $j=1,2,3$. By Lemma \ref{lem:degdelta} this implies that, for $n_j^{(1)}\in\pi_1^{-1}(q_1)$, we have $e_{\pi_1}(n_j^{(1)})=1$. Hence, by (\ref{eq:aux1}),  $\pi_1^{-1}(q_1)$ consists of two points, say 
$\pi_1^{-1}(q_1)=\{n_1^{(1)},\, n_2^{(1)}\}$, both having ramification index 1 under $\pi_1$. For the third node $n_3$, we have that $e_{\pi_2}(n_3^{(2)})=1$, and $e_{\pi_1}(n_3^{(1)})$ can be 1 or 2, and we get (iii)(b).

\smallskip 

CASE 3. $\deg(\pi_1)=3$ and $\deg(\pi_2)=2$. This case is analogous to Case 2.


\smallskip

CASE 4. $\deg(\pi_1)=\deg(\pi_2)=2$: 
In this case, both components are hyperelliptic and, by Proposition \ref{prop:converse} (\ref{item:qi}), we have $|Q_i|\leq 2$ for $i=1,2$.

If $|Q_1|=|Q_2|=1$ then the branches of the nodes are all conjugated under $\pi_1$ and $\pi_2$ and, since  $\deg(\pi_1)=\deg(\pi_2)=2$, we have $\delta\leq 2$. 
Note also that, since $e_j\leq e_{\pi_i}(n_j^{(i)})$, we have
$$\sum_{j=1}^\delta e_j\leq \sum_{j=1}^\delta e_{\pi_i}(n_j^{(i)}) =\deg(\pi_i)=2$$
If $\delta=1$ then by Theorem \ref{thm:hyp}, the branch of the node in at least one of the components must have ramification index 1 and we get (i)(a). If $\delta=2$ then, since the nodes are conjugated, all the  branches must have ramification index 1. But in this case, by Theorem \ref{thm:hyp}, $C$ would be hyperelliptic, contradicting the fact that $C$ is trigonal.

If $|Q_1|=1$ and $|Q_2|=2$, then $Q_1=\{q_1\}$ and $Q_2=\{q_2,q_2'\}$. 
Since $|Q_1|=1$,  the branches of the nodes are conjugated under $\pi_1$ and, since $\deg(\pi_1)=2$, there are at most 2 nodes and the branches of the nodes are conjugated points of ramification index 1 under $\pi_1$. 
On the other hand, since $|Q_2|=2$, we have $\delta\geq2$ showing that $C$ has exactly 2 nodes. 
Moreover, the branches of the nodes are non conjugated points under $\pi_2$ and the ramification index can be 1 or 2 , and we get (ii)(d).

The case where $|Q_1|=2$ and $|Q_2|=1$ is analogous to the previous one.

Finally, consider $|Q_1|=|Q_2|=2$, say $Q_1=\{q_1,q_1'\}$ and $Q_2=\{q_2,q_2'\}$. In particular we have $2\leq \delta\leq 4$.

We first note that if there exist $m\in C_i$ such that $\pi_i(m)=q_i$ and $m\neq n_j^{(i)}$ for all $j=1,\ldots,\delta$ 
then $T=\tau^{-1}(\tau(m))$ is a rational tail associated to $q_i$ containing $B_{i'}$, where $\{i,i'\}=\{1,2\}$.
Moreover, if $\pi_{i'}(n_j^{({i'})})=q_{i'}'$ and  $T_j=\tau^{-1}(n_j)$ then  $\pi(T_j)$
contains $q_{i'}$ and $q_{i'}'$ and hence contains also $B_{i'}$. Thus, for a general point $q\in B_{i'}$, the preimage $\pi^{-1}(q)$ contains two points of $C_{i'}$, at least one point of $T$ and at least one point of $T_j$, contradicting the fact that $\deg(\pi)=3$. 

Note moreover that if $\pi_i(n_{j}^{(i)})=q_i'$ and  $T_{j}=\tau^{-1}(n_{j})$ then  $\pi(T_{j})$
contains $q_i$, since it is connected and contains $\pi_{i'} (n_j^{(i')}) \in B_{i'}$ and $\pi_i(n_{j}^{(i)})\in B_i$, for $\{i,i'\}=\{1,2\}$. Hence, since $\pi(T_{j})$ contains $q_i$ and $q_i'$, it also contains $B_i$. 
Since $\deg(\pi)=3$ and $\deg(\pi_i)=2$, there can be at most one such $j$, that is, 
$$|\pi^{-1}(q_i')|=1,$$
for $i=1,2$. 
In particular this shows that $\delta\leq 3$
and either
$$|\pi^{-1}(q_1)|=|\pi^{-1}(q_2)|=1
\quad\text{ or }\quad
|\pi^{-1}(q_1)|=|\pi^{-1}(q_2)|=2.
$$

Let's examine the possible cases.
If 
$\pi_1^{-1}(q_1)=\{n_j^{(1)}\}$ and
$\pi_2^{-1}(q_2)=\{n_j^{(2)}\}$
for some $j\in\{1,\ldots,\delta\}$, then in particular 
$e_{\pi_i}(n_j^{(i)})=2$ for $i=1,2$.
Since $|\pi^{-1}(q_i')|=1$, 
we get (ii)(e).

If 
$\pi_1^{-1}(q_1)=\{n_{j_1}^{(1)}\}$ and
$\pi_2^{-1}(q_2)=\{n_{j_2}^{(2)}\}$
for some ${j_1},{j_2}\in\{1,\ldots,\delta\}$ with ${j_1}\neq {j_2}$, 
 then in particular $q_1\neq q_2$ and the rational chain $B_0$ defined in Lemma \ref{lem:Tj} exists. 
Moreover, by Lemma \ref{lem:Tj} if
$T_{j_i}=\tau^{-1}(n_{j_i})$
then $\pi(T_{j_i})$ contains $B_0$
and, for every component $B'$ of $B_0$, the restriction of $\pi$ to $T_{j_i}\cap \pi^{-1}(B')$ has degree at least 
$e_{\pi_i}(n_{j_i}^{(i)})=2$, for $i=1,2$.
Thus for a general point $q$ in $B'$, 
the preimage $\pi^{-1}(q)$ contains at least two points of $T_{j_1}$ and at least two points of $T_{j_2}$, contradicting the fact that $\deg(\pi)=3$.  Hence this case does not happen.

If 
$\pi_1^{-1}(q_1)=\{n_{j_1}^{(1)},n_{j_2}^{(1)}\}$
and
$\pi_2^{-1}(q_2)=\{n_{j_1}^{(2)},n_{j_2}^{(2)}\}$
for some ${j_1},{j_2}\in\{1,\ldots,\delta\}$ with ${j_1}\neq {j_2}$, 
then in particular
$e_{\pi_i}(n_{j_1}^{(i)})=e_{\pi_i}(n_{j_2}^{(i)})=1$, for $i=1,2$.
Since $|\pi^{-1}(q_i')|=1$,
we get (iii)(c).

If 
$\pi_1^{-1}(q_1)=\{n_{j_1}^{(1)},n_{j_2}^{(1)}\}$
and
$\pi_2^{-1}(q_2)=\{n_{j_2}^{(2)},n_{j_3}^{(2)}\}$
for some distinct ${j_1},{j_2}, j_3\in\{1,\ldots,\delta\}$, 
then in particular
$e_{\pi_1}(n_{j_1}^{(1)})=e_{\pi_1}(n_{j_2}^{(1)})=1$
and
$e_{\pi_2}(n_{j_2}^{(2)})=e_{\pi_2}(n_{j_3}^{(2)})=1$.
Note that 
${\pi_1}(n_{j_3}^{(1)})=q_1'$
and
${\pi_2}(n_{j_1}^{(2)})=q_2'$
and we again get (iii)(c). Note that 
$e_{\pi_1}(n_{j_3}^{(1)})$ and $e_{\pi_2}(n_{j_1}^{(2)})$ can be either 1 or 2.

Lastly, we note that
$\pi_1^{-1}(q_1)=\{n_{j_1}^{(1)},n_{j_2}^{(1)}\}$
and
$\pi_2^{-1}(q_2)=\{n_{j_3}^{(2)},n_{j_4}^{(2)}\}$,
with ${j_1},{j_2}, j_3,j_4\in\{1,\ldots,\delta\}$ distinct,
does not happen, since $\delta\leq 3$. 
\end{proof}

\bibliographystyle{aalpha}

\addcontentsline{toc}{section}{References}

\bigskip
\noindent{\smallsc Juliana Coelho, Universidade Federal Fluminense (UFF), 
Instituto de Matem\'atica e Estat\'istica - 
 Rua Prof. Marcos Waldemar de Freitas Reis, S/N, 
 Gragoat\'a, 24210-201 - Niter\'oi -  RJ,  Brasil}\\
{\smallsl E-mail address: \small\verb?julianacoelhochaves@id.uff.br?}
\bigskip
\bigskip

\noindent{\smallsc Frederico Sercio, Universidade Federal de Juiz de Fora  (UFJF),  Departamento de Matem\'{a}tica, Rua Jos\'{e} Louren\c{c}o Kelmer, s/n - Campus Universit\'{a}rio, 36036-900 - Juiz de Fora - MG, Brasil}\\
{\smallsl E-mail address: \small\verb?fred.feitosa@ufjf.edu.br?}

\end{document}